\documentclass[12pt,preprint]{elsarticle}

\usepackage[T1]{fontenc}
\usepackage{graphicx}
\usepackage{latexsym}
\usepackage{amsmath}
\usepackage{amsxtra}
\usepackage{amssymb}
\usepackage{psfrag}
\usepackage{natbib}

\newcommand{\w}{\omega}

\newtheorem{define}{Definition}
\newtheorem{propos}[define]{Proposition}
\newtheorem{remrk}[define]{Remark}

\newtheorem{prob}[define]{Problem}
\newcommand{\C}{\mathbb{C}}
\newcommand{\Z}{\mathbb{Z}}
\newcommand{\N}{\mathbb{N}}
\newcommand{\R}{\mathbb{R}}

\graphicspath{{./figure/}}

\begin{document}

\journal{Systems and Control letters}
\begin{frontmatter}
\title {Model Identification of a network as compressing sensing}

\author[umn]{D.~Materassi}
\author[si]{G.~Innocenti}
\author[la]{L. Giarr\'e}
\author[umn]{M. Salapaka}

\address[si]{
  Dipartimento di Ingegneria dell'Informazione \\
  Universit\'a di Siena, \\
  via Roma 56, 53100 Siena, Italy\\
  E-mail: innocenti@dii.unisi.it
}
\address[umn]{
  Department of Electrical and Computer Engineering,\\
  University of Minnesota,\\
  200 Union St SE, 55455, Minneapolis (MN) \\
  E-mail: mater013@umn.edu
}

\address[la]{
  Dipartimento di Ingegneria Elettrica, Elettronica e delle Telecomunicazioni\\
  Universit\'a di Palermo,\\
  viale Delle Scienze, 90128 Palermo, Italy \\
  E-mail: giarre@unipa.it
}

\begin{abstract}
\noindent
In many applications, it is important to derive information about the topology and the internal connections of  dynamical systems interacting together.
	Examples can be found in fields as diverse as Economics, Neuroscience and Biochemistry.
	The paper deals with the problem of deriving a descriptive model of a network, collecting the node outputs as time series with no use of a priori insight on the topology, and unveiling an unknown structure as the estimate of a ``sparse Wiener filter''. 	A geometric interpretation of the problem in a pre-Hilbert space for wide-sense stochastic processes is provided. 	We cast the problem as the optimization of a cost function where a set of parameters are used to operate a trade-off between accuracy and complexity in the final model.
	The problem of reducing the complexity is addressed by fixing a certain degree of sparsity and finding the solution that ``better'' satisfies the constraints according to the criterion of approximation. Applications starting from real data and numerical simulations are provided.
\end{abstract}
\begin{keyword}
Identification, Sparsification, Reduced Models, Networks, Compressive Sensing
\end{keyword}
\end{frontmatter}



\section{Introduction}\label{sec:intro}
The interest on networks of dynamical systems is increasing in recent years, especially because of their capability of modeling and describing a large variety of phenomena and behaviors.
Remarkably, while networks of dynamical systems are well studied and analyzed in physics \citep{BocLat06,GirNew02,NewGir04} and engineering \citep{ZhaLiu07,Olf07,SchRib08}, there are fewer results that address the problem of reconstructing an unknown dynamical network, since it poses formidable theoretical and practical challenges \citep{Kol09}.
However, unraveling the interconnectedness and the interdependency of a set of processes is of significant interest in many fields and the necessity for general tools is rapidly emerging (see \cite{Tim07,BocIva07,NapSau08} and the bibliography therein for recent results).
In the literature, authors have approached this problem in different ways and with various purposes, such as deriving a network topology from just sampled data (see e.g. \cite{ManSta00,Tim07,NapSau08,OzeUzu08}) or determining the presence of substructures in the networked system (see e.g. \cite{NewGir04,BocIva07}).
The Unweighted Pair Group Method with Arithmetic mean (UPGMA) \citep{MicSok57} is one of the first techniques proposed to reveal an unknown topology.
It has found widespread use  in the reconstruction of phylogenetic trees and is widely employed in other areas such as communication systems and resource allocation problems \citep{FreHar02}.
Another well-known technique for the identification of a tree network is developed in \cite{ManSta00} for the analysis of a stock portfolio.
The authors identify a tree structure according to the following procedure: i) a metric based on the correlation index is defined among the nodes; ii) such a metric is employed to extract the Minimum Spanning Tree \cite{Die06} which forms the reconstructed topology.
However, in \cite{InnMat08} a severe limit of this strategy is highlighted,
where it is shown that, even though the actual network is a tree, the presence of dynamical connections or delays can lead to the identification of a wrong topology.
In \cite{MatInn09} a similar strategy, where the correlation metric is replaced by a metric based on the coherence function, is numerically shown to provide an exact reconstruction for tree topologies. Furthermore, in \cite{MatInn08} it is also illustrated that a correct reconstruction can be guaranteed for any topology with no cycles.\\
An approach for the identification of more general topologies is developed in the area of Machine Learning for Bayesian dynamical networks \citep{GetFri02,FriKol03}.
In this case, however, a massive quantity of data needs to be collected in order to accurately evaluate conditional probability distributions.\\
In \cite{BocIva07} different techniques to quantify and evaluate the modular structure of a network are compared and a new one is proposed trying to combine both the topological and dynamic information of the complex system.
However, the network topology is only qualitatively estimated in terms of ``clusters'', \cite{blondel}.
In \cite{Tim07} a method to identify a network of dynamical systems is described. However, primary assumptions of the technique are the possibility to manipulate the input of every single node and to conduct as many experiments as needed to detect the link connectivity.\\
More recently, in \cite{NapSau08} and \cite{mat09} interesting equivalences between the identification of a dynamical network and a $l_0$ sparsification problem are highlighted, suggesting the difficulty of the reconstruction procedure \citep{CanTao05,CanWak08}.

In this paper, the main idea is to cast the problem of unveiling an unknown structure as the estimate of a ``sparse Wiener filter''.
Given a set of $N$ stochastic processes $\mathcal{X}=\{x_1,...,x_n\}$, we consider each $x_j$ as the output of an unknown dynamical system, the input of which is given by at most $m_j$ stochastic processes $\{x_{\alpha_{j,1}}, ..., x_{\alpha_{j,m_j}}\}$ selected from $\mathcal{X}\setminus \{x_j\}$.
The choice of $\{x_{\alpha_{j,1}}, ..., x_{\alpha_{j, m_j}}\}$ is realized according to a criterion that takes into account the mean square of a modeling error. The parameters $m_j$ can be a-priori defined, if we intend to impose a certain degree of sparsity on the network or a strategy for self-tuning can be introduced penalizing the introduction of any additional link, if it does not provide a significant reduction of the cost.
For any possible choice of $\{x_{\alpha_{j,1}}, ..., x_{\alpha_{j, m_j}}\}$, the computation of the related Wiener Filter leads to the definition of a modeling error, which is a natural way to measure the quality of the description of $x_j$ granted by the time series $\{x_{\alpha_{j,1}}, ..., x_{\alpha_{j, m_j}}\}$ in terms of predictive/smoothing capability.
Once this step has been performed, each system is represented by a node of a graph and, then, the arcs linking any $x_{\alpha_{j, m_k}}$ to $x_j$ are introduced for each node $x_j$.
At the end of this procedure a graph, modeling the network topology, has been obtained.\\
We start introducing a pre-Hilbert space for wide-sense stochastic processes, where the inner product defines the notion of perpendicularity between two stochastic processes.
We will show that this way of formulating the problem has strong similarities with $l_0$-minimization problems, which have been a very active topic of research in Signal Processing during the last few years.
Indeed, a standard $l_0$-minimization problem amounts to finding the ``sparsest'' solution of a set of linear equations in a finite dimension Hilbert space \cite{CanWak08}.
With no additional assumptions on the solution, the problem is combinatorially intractable \cite{CanWak08}. This has propelled the study of relaxed problems involving, for example, the minimization of the $\ell_1$ norm, which is a convex problem and it is known to provide solutions with at least a certain order of sparsity \cite{NapSau08}. Unfortunately, we will show that
such a relaxation procedure is not viable in our formulation. Indeed, it is not possible to define a suitable norm in the space of transfer functions that guarantees a certain degree of sparsity. For this reason, we resort to some greedy techniques in order to find a suboptimal solution with desired sparsity properties.




The rest of the paper is organized as follows. 

%

In Section \ref{sec:problem} the network topology identification problem is formulated.
In Section \ref{sec:geometric} a geometric interpretation and the construction of a pre-Hilbert space needed 
 to define a distance and an inner product for stochastic processes is addressed.
 In Section \ref{sec:links} the connection with the compressive sensing problem is shown. 
In Section \ref{sec:solution via suboptimal} a greedy algorithm addressing the problem is presented along with an alternative approach based on iterated re-weighted least squares. Finally, in Section \ref{sec:examples} the results obtained by applying the techniques to numerical data are discussed. In the Appendix most of the needed definitions, propositions,  lemmas and proofs needed for the construction of the pre-Hilbert space are added. \\
\noindent{\bf Notation:}\\
$\Z$: the integer set;\\
$\R$: the real set;\\
$\C$: the complex set;\\
$E[\,\cdot\,]$: the mean operator; \\
$(\,\cdot\,)^T$: the transponse operator.
\section{Problem Formulation}\label{sec:problem}

In this section we provide the main definitions to cast the problem of modeling a network structure. We consider $M$ stochastic processes $x_1, \ldots, x_M$ representing the output of $M$ nodes in a network with an unknown topology. In order to determine the links connecting the nodes, we follow a procedure based on estimation techniques. Given a process $x_j$ and a parameter $m_j \in N$, we search for the $M_j$ processes $x_{\alpha_1},\ldots, x_{\alpha_k}, \ldots, x_{\alpha_{m_j}}$ with $\alpha_k \neq j, \forall k=1,\ldots,m_j$, which provide the best estimate of $x_j$ according to a quadratic criterion. The value $m_j$ is a tuning parameter allowing one to operate a trade-off between the sparsity and the accuracy of the model. Thus, $m_j$ can be a-priori chosen or, conversely, determined using a self-tuning strategy. 

We introduce now some definitions and results, which turn out essential for the rigorous formulation of the  problem.
For sake of clarity, we report in the Appendix all the additional needed definitions and properties.

\begin{define}
	Let $e_{i}(t)$, with $i=1,...,N$ and $t\in\Z$, be $N$ scalar time-discrete, zero-mean, jointly  wide-sense stationary random processes in a probability space $(\Omega, \sigma, \Pi)$, where $\Omega$ is the sample space, $\sigma$ is a sigma algebra on $\Omega$ and $\Pi$ is a probability measure on $\sigma$.
	Then, for any $t\in\Z$ define the vector $e(t):=(e_1(t),..,e_{N}(t))^T$, describing a $N$-dimensional time-discrete, zero-mean, wide-sense stationary random process. Moreover, for any $t_1, t_2 \in \Z$ denote the $(N \times N)$ covariance matrix as
	\begin{align}\label{eq:covariance e(t)}
		R_{e}(t_1, t_2):=E[e(t_1)e^T(t_2)].
	\end{align}
	The entry $(i,j)$, with $i,j\in \{1,...,N \}$ of $R_{e}(t_1,t_2)$ is given by
	\begin{align*}
		R_{e_i e_j}(t_1,t_2):=E[e_i(t_1) e_j(t_2)].
	\end{align*}
	Since any two processes $e_i$ and $e_j$ are jointly wide-sense stationary by definition, $R_{e_i e_j}(t_1, t_2)$ only depends on $\tau:=t_2-t_1$:
	\begin{align*}
		R_{e_i e_j}(0,t_2-t_1)=R_{e_i e_j}(t_1, t_2), \qquad \forall t_1,t_2 \in \Z,
	\end{align*}
	and, thus, $R_{e}(t_1, t_2)$ too depends only on $t_2-t_1$, i.e.
	\begin{align*}
		R_{e}(0,t_2-t_1)=R_{e}(t_1, t_2) , \qquad \forall t_1,t_2 \in \Z \ .
	\end{align*}
	Abusing the notation it is possible to write more concisely $R_{e}(\tau)=R_{e}(0,\tau)$.
\end{define}

\begin{define}
	Consider a vector-valued sequence $h(k) \in \R^{1\times N}$ with $k\in \Z$. We define its $\mathcal{Z}$-transform as:
	\begin{align*}
		H(z):=\sum_{k=-\infty}^{\infty}h(k) z^{-k} \ ,
	\end{align*}
	and we assume that the sum converges for any $z\in \C$ such that $r_1<|z|<r_2$ with $r_1<1<r_2$.
	Besides, we also assume that any entry of the $N$-dimensional vector $H(z)$ is a real-rational function of $z$.
	By the properties of the $\mathcal{Z}$-transform, $H(z)$, along with the convergence domain defined by $r_1$ and $r_2$, uniquely identifies the sequence $h(k)$.
	Moreover, given a vector of rationally related random processes $e(t):=(e_1(t),..,e_{N}(t))^T$, denote for any $t\in \Z$ the random variable
	\begin{align*}
		y_t:=\sum_{k=-\infty}^{\infty}h(k) e(t-k).
	\end{align*}
	Then, we define by $H(z)e$ the related stochastic process such that
	\begin{align*}
		(H(z)e)(t)=y_t \qquad \forall~t\in\Z \ .
	\end{align*}
\end{define}

\begin{define}
	Given a  $N$-dimensional time-discrete, zero-mean, wide-sense stationary random process $e(t):=(e_1(t),..,e_{N}(t))^T$, we define its power spectral density $\Phi_{e}(z)$ as:
	\begin{align*}
		\Phi_{e}(z):=\sum_{\tau=-\infty}^{\infty}R_{e}(\tau)z^{-\tau} \ ,
	\end{align*}
	having a certain domain of convergence $\mathcal{D}\subseteq \C$ in the variable $z$.
	Denoting by $\Phi_{e_i e_j}(z)$ the entry $(i,j)$ of $\Phi_{e}(z)$, it follows that
	\begin{align*}
		\Phi_{e_i e_j}(z):=\sum_{\tau=-\infty}^{\infty}R_{e_i e_j}(\tau)z^{-\tau}.
	\end{align*}
	Besides, if for any $i,j\in \{1,...,N\}$ the power spectral density $\Phi_{e_i e_j}(z)$ exists on the unit circle $|z|=1$ of the complex plane and it is a real-rational function of $z$, we formally write
	\begin{align*}
		\Phi_{e_i e_j}(z)=\frac{A(z)}{B(z)}\qquad \text{for } i,j=1,\ldots, N,
	\end{align*}
	with $A(z),B(z)$ real coefficient polynomials, such that $B(z)\neq 0$ for any $z\in \C, |z|=1$.
	In such a case, we say that $e$ is a vector of rationally related random processes.
\end{define}
%
Finally, let us introduce the following sets:
\begin{align*}
	\mathcal{F}:=
	&\{W(z)| W(z)\text { is a real-rational scalar function}\\
	&\quad\text{of } z\in\C\text{ defined for } |z|=1\}\\
	\mathcal{F}^{m\times n}:=
	&\{W(z)| W(z)\in \C^{m\times n}\text { and any }\\
	&\quad\text{of its entries is in $\mathcal{F}$} \}.
\end{align*}

\begin{prob} \label{pro1}
Consider a set $\mathcal{X}:=\{x_{1},...,x_{n}\}\subset \mathcal{F}e$ of $n$ rationally related processes with zero mean and known (cross)-power spectral densities $\Phi_{x_i x_j}(z)$.
Then, in the above framework the mathematical formulation of the considered problem can be stated as follows:
\begin{align}\label{eq:optimization}
	\min_{\stackrel	{\alpha_{j,1},...,\alpha_{j,m_j}\neq j}
			{W_{j,\alpha_{j,k}}(z)\in \mathcal{F}}
	}
	E\left\{
		x_j-\sum_{k=1}^{m_j}W_{j,\alpha_{j,k}}(z)x_{\alpha_{j,k}}
	\right\} \ ,
\end{align}
where every $W_{j,\alpha_{j,k}}(z)$, with $k=1,...,m_j$, is a possibly non-causal transfer function.
\end{prob}

\begin{remrk}
Fixed any set $\{\alpha_{j,k}\}_{k=1}^{m_j}$, Problem \ref{pro1} is immediately solved by a multiple input Wiener filter. However, the determination of the parameters $\alpha_{j,k}$ makes the problem combinatorial.
\end{remrk}

\section{A geometric interpretation}\label{sec:geometric}

It is possible to give a geometrical interpretation of (\ref{eq:optimization}) by embedding the 
processes $x_1, \ldots, x_M$ in a suitable vector space. This interpretation has the main advantage of giving to the Wiener filter the meaning of a  projective operator in such a space.

\begin{define}
	Let $e=(e_1,...,e_N)^T$ be a vector of $N$ rationally related random processes.
	We define the set $\mathcal{F}e$, as
	\begin{align*}
		\mathcal{F}e:=
		\left\{x=H(z)e~|~H(z)\in \mathcal{F}^{1\times N}\right\}.
	\end{align*}
\end{define}

\begin{propos}
	The ensemble $(\mathcal{F}e,+,\cdot,\R)$ is a vector space.
\end{propos}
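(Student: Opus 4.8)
The plan is to exhibit $\mathcal{F}e$ as an $\R$-linear subspace of the real vector space $V$ of all $\R$-valued random processes indexed by $t\in\Z$ on $(\Omega,\sigma,\Pi)$, equipped with the pointwise operations $(x+y)(t):=x(t)+y(t)$ and $(\lambda x)(t):=\lambda\,x(t)$. Since all eight vector-space axioms hold in $V$ for purely pointwise reasons (they are inherited from $\R$), it is enough to verify the subspace conditions: $\mathcal{F}e$ is nonempty, closed under addition, and closed under real scalar multiplication. Equivalently — and this is the formulation I would actually carry out — consider the assignment $\Lambda\colon H(z)\mapsto H(z)e$ from $\mathcal{F}^{1\times N}$ into $V$; once $\Lambda$ is seen to be $\R$-linear, $\mathcal{F}e=\Lambda(\mathcal{F}^{1\times N})$ is the image of a linear map, hence automatically a subspace.

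First I would record that $\mathcal{F}$ is closed under addition and under multiplication by real scalars, which is where essentially all the (mild) content sits. If $W_1,W_2\in\mathcal{F}$, then $W_1+W_2$ is again a real-rational function of $z$, and a point $z_0$ with $|z_0|=1$ can be a pole of $W_1+W_2$ only if it is a pole of $W_1$ or of $W_2$; since neither has poles on the unit circle, neither does $W_1+W_2$, so $W_1+W_2\in\mathcal{F}$. Closure under $\lambda\in\R$ is immediate, the pole set of $\lambda W$ being contained in that of $W$ (empty when $\lambda=0$). Hence $\mathcal{F}^{1\times N}$, with entrywise operations, is closed under $+$ and under scalars and contains the zero row vector $\mathbf{0}$.

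Next I would check linearity of $\Lambda$. Writing $h_i(k)\in\R^{1\times N}$ for the sequence whose $\mathcal{Z}$-transform is $H_i(z)$, the relevant Definition gives $(H_i(z)e)(t)=\sum_{k=-\infty}^{\infty}h_i(k)\,e(t-k)$, and $H_1+H_2$ has transform-sequence $h_1+h_2$ on the annulus obtained by intersecting the two individual convergence annuli (this intersection is again an annulus containing $|z|=1$, hence the Laurent coefficients there are exactly those dictated by the Definition). Termwise addition of the two absolutely convergent series then yields $\big((H_1+H_2)(z)e\big)(t)=(H_1(z)e)(t)+(H_2(z)e)(t)$ for all $t$, and similarly $\big((\lambda H)(z)e\big)(t)=\lambda\,(H(z)e)(t)$. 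Thus $\Lambda(H_1+H_2)=\Lambda(H_1)+\Lambda(H_2)$ and $\Lambda(\lambda H)=\lambda\,\Lambda(H)$, so $\mathcal{F}e$ is closed under both operations, contains the zero process $0=\Lambda(\mathbf{0})$, and contains the additive inverse $-x=\Lambda(-H)=(-H(z))e$ of each $x=H(z)e$. Together with the axioms inherited from $V$, this shows $(\mathcal{F}e,+,\cdot,\R)$ is a vector space.

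I do not expect a real obstacle: the statement is bookkeeping. The only point requiring a little care is the handling of the bi-infinite convolution sums in the linearity check — one must confirm that the intersection of the two convergence annuli is a nontrivial annulus containing the unit circle (so the rational functions and their associated processes are simultaneously well defined) and that termwise addition of the series, which converge absolutely by the rationality hypotheses, is legitimate. I would also note in passing that the representation $x=H(z)e$ need not be unique when the components of $e$ are $\mathcal{F}$-linearly dependent, but uniqueness is irrelevant here since only the image set $\Lambda(\mathcal{F}^{1\times N})$ enters the argument.
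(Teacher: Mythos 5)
Your proof is correct, but it is organized differently from the paper's. The paper verifies all eight vector-space axioms one by one, working with equivalence classes $X_1,X_2,X_3\in\overline{\mathcal{F}e}$ (processes identified up to almost-sure equality) and deducing each axiom for the classes from the corresponding pointwise identity on representatives; crucially, it does \emph{not} establish closure inside that proof, but defers it to a separate Appendix proposition showing $\mathcal{F}e$ is closed under addition, under transformation by $H(z)\in\mathcal{F}$, and under real scalars, with the identities $H^{(1)}(z)e+H^{(2)}(z)e=[H^{(1)}(z)+H^{(2)}(z)]e$ and $\alpha[H^{(1)}(z)e]=[\alpha H^{(1)}(z)]e$. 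You invert this division of labor: you put all the genuine content into the linearity of the map $\Lambda:H(z)\mapsto H(z)e$ (which is precisely the content of that Appendix proposition --- pole locations of sums of rational functions, Laurent coefficients on the common annulus, termwise addition of the mean-square convergent convolution series) and then obtain the axioms for free by exhibiting $\mathcal{F}e$ as the image of a linear map into the ambient space of all processes with pointwise operations. Your route is shorter and cleanly separates the analytic issues from trivial algebra; the paper's route, for all its redundancy, is phrased at the level of the quotient $\overline{\mathcal{F}e}$, which is the object actually used later when the inner product is introduced (where $x_1\sim x_2$ must be identified for definiteness of the norm), although as written the paper's proof conflates $\mathcal{F}e$ with $\overline{\mathcal{F}e}$. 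Your closing remark that uniqueness of the representation $x=H(z)e$ is irrelevant to the subspace argument is accurate and worth keeping.
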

\begin{proof}
	Consider $X_1,X_2,X_3\in\overline{\mathcal{F}e}$, $x_1\in X_1$,$x_2\in X_2$, $x_3\in X_3$, and $\alpha_1,\alpha_2\in \Re$.
	\begin{itemize}
		\item	Commutativity for the sum\\
		Consider
		\begin{align}
			x:=x_1+x_2 \in X_1+X_2.
		\end{align}
		Then, since $x=x_2+x_1$, we also have that $x\in X_2+X_1$.
		Since $(\overline{\mathcal{F}e},+,\cdot,\R)$ is a partition, we obtain $X_1+X_2=X_2+X_1$.
		\item	Associativity for the sum\\
		Consider
		\begin{align}
			x_a:=x_1+(x_2+x_3) \in X_1+(X_2+X_3)\\
			x_b:=(x_1+x_2)+x_3 \in (X_1+X_2)+X_3\\
		\end{align}
		Then, since $x_a=x_b$, for the property of a partition set,
		$X_1+(X_2+X_3)=(X_1+X_2)+X_3$.
		\item	Additive identity\\
		The process $x_0(t)=0$ for any $t$ is in $\mathcal{F}e$, because the zero transfer function is in $\mathcal{F}$. Let the set $X_0\in\overline{\mathcal{F}e}$ be the set that constains $x_0$.
		Since $x_1+x_0=x_1$ for any $x_1$, we have that $X_0$ is the identity element for the addition.
		\item	Additive inverse\\
		If $x_1\in \mathcal{F}e$, then also $-x_1\in \mathcal{F}e$, because the transfer function $-1\in \mathcal{F}$.
		\item	Scalar multiplication identity\\
		Let $x_1$ be a process in $X_1$. The scalar $1$ is the multiplication identity. Indeed, we have
		\begin{align}
			1\cdot x_1 = x_1 \in X_1.
		\end{align}
		Thus, it holds that $1\cdot X_1=X_1$.
		\item Associativity of the scalar multiplication\\
		Since we have that $x=\alpha_1(\alpha_2 x_1)=(\alpha_1\alpha_2)x_1$, we also have that  $\alpha_1(\alpha_2 X_1)=(\alpha_1\alpha_2)X_1$.
		\item Distribuitivity of the scalar sum\\
		Since we have $(\alpha_1+\alpha_2)x_1=\alpha_1x_1+\alpha_2x_1$, we also have $(\alpha_1+\alpha_2)X_1=\alpha_1X_1+\alpha_2X_1$
		\item Distribuitivity of the vector sum\\
		Since we have $\alpha_1(x_1+x_2)=\alpha_1x_1+\alpha_1x_2$, we also have $\alpha_1(X_1+X_2)=\alpha_1X_1+\alpha_1X_2$.
		$\hfill\square$
	\end{itemize}
\end{proof}
\begin{define}
	For any $x\in \mathcal{F}e$ we denote the norm induced by the inner product as
	\begin{align*}
		\|x\|:=<x,x>.
	\end{align*}
\end{define}

As shown in details in the Appendix,	the set $\mathcal{F}e$, along with the operation $<\cdot,\cdot>$ is a pre-Hilbert space (with the technical assumption that $x_1$ and $x_2$ are the same processes if $x_1 \sim x_2$).

We provide an ad-hoc version of the Wiener Filter (guaranteeing that the filter will be real rational) with an interpretation in terms of the Hilbert projection theorem.
Indeed, given signals $y, x_1,...,x_n \in \mathcal{F}e$,
the Wiener Filter estimating $y$ from $x:=(x_1,...,x_n)$ can be interpreted as the operator that determines the projection of $y$ onto the subspace $\mathcal{F}x$
\begin{propos}\label{eq: my wiener}
	Let $e$ be a vector of rationally related processes.
	Let $y$ and $x_1,...,x_n$ be processes in the space $\mathcal{F}e$.
	Define $x:=(x_1,...,x_n)^T$ and consider the problem
	\begin{align}\label{eq: cost general wiener}
		\inf_{W \in \mathcal{F}^{1\times n}} \|y-W(z)x\|^2.
	\end{align}
	If $\Phi_{x}(\w)>0$, for all $\w\in[-\pi,\pi]$, the solution exists, is unique and has the form
	\begin{align*}
		W(z)=\Phi_{yx}(z)\Phi_{xx}(z)^{-1}.
	\end{align*}
	Moreover, for any $W'(z)\in \mathcal{F}^{1\times n}x$, it holds that
	\begin{align}\label{eq:perp hilbert projection thm}
		<y-W(z)x,W'(z)x>=0.
	\end{align}
\end{propos}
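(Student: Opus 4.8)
The plan is \emph{not} to appeal to the Hilbert projection theorem directly: the space $\mathcal{F}e$ is only pre-Hilbert and the subspace $\mathcal{F}^{1\times n}x:=\{W(z)x\,|\,W\in\mathcal{F}^{1\times n}\}$ need not be closed, so existence of a minimiser is not automatic. Instead I would exhibit the candidate $W^{\star}(z):=\Phi_{yx}(z)\Phi_{xx}(z)^{-1}$ explicitly, show it is admissible, verify the orthogonality relation \eqref{eq:perp hilbert projection thm} by a direct frequency-domain computation, and then read off optimality and uniqueness from a Pythagorean identity. For admissibility: since $x$ is a vector of rationally related processes, $\Phi_{xx}(z)$ is a real-rational matrix; the hypothesis $\Phi_{x}(\w)>0$ on $[-\pi,\pi]$ says $\Phi_{xx}(e^{i\w})$ is Hermitian positive definite on the unit circle, hence $\det\Phi_{xx}(z)\neq 0$ there, so $\Phi_{xx}(z)^{-1}=\mathrm{adj}\,\Phi_{xx}(z)/\det\Phi_{xx}(z)$ is real-rational with no poles on $|z|=1$. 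Multiplying by the real-rational row vector $\Phi_{yx}(z)$ preserves these properties, so each entry of $W^{\star}$ lies in $\mathcal{F}$, i.e. $W^{\star}\in\mathcal{F}^{1\times n}$, and consequently $W^{\star}(z)x\in\mathcal{F}e$.

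For the orthogonality step I would use bilinearity of the spectral density together with the filtering rule for rationally related processes established in the Appendix: writing $x=\mathcal{H}(z)e$, every element of $\mathcal{F}^{1\times n}x$ has the form $W'(z)x$ with $W'\in\mathcal{F}^{1\times n}$, and
\[
\Phi_{(y-W^{\star}x)(W'x)}(e^{i\w})=\bigl[\Phi_{yx}(e^{i\w})-W^{\star}(e^{i\w})\,\Phi_{xx}(e^{i\w})\bigr]\,W'(e^{i\w})^{*}.
\]
Substituting $W^{\star}=\Phi_{yx}\Phi_{xx}^{-1}$ makes the bracket vanish identically, so this cross-spectrum is the zero function and therefore $\langle y-W^{\star}(z)x,\,W'(z)x\rangle=0$ for every $W'\in\mathcal{F}^{1\times n}$; this is exactly \eqref{eq:perp hilbert projection thm}.

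For optimality and uniqueness: given any $W\in\mathcal{F}^{1\times n}$, decompose $y-Wx=(y-W^{\star}x)+(W^{\star}-W)x$. The cross term vanishes by the previous step because $(W^{\star}-W)x\in\mathcal{F}^{1\times n}x$, so $\|y-Wx\|^{2}=\|y-W^{\star}x\|^{2}+\|(W^{\star}-W)x\|^{2}\ge\|y-W^{\star}x\|^{2}$, which shows the infimum is attained at $W^{\star}$. Equality forces
\[
\|(W^{\star}-W)x\|^{2}=\frac{1}{2\pi}\int_{-\pi}^{\pi}(W^{\star}-W)(e^{i\w})\,\Phi_{xx}(e^{i\w})\,(W^{\star}-W)(e^{i\w})^{*}\,d\w=0,
\]
and since $\Phi_{xx}(e^{i\w})$ is positive definite for every $\w$ the integrand is nonnegative and vanishes only where $(W^{\star}-W)(e^{i\w})=0$; a nonzero rational function cannot vanish on all of the unit circle, so $W=W^{\star}$. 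Hence the minimiser is unique, with the claimed form.

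The step I expect to be the main obstacle is the bookkeeping around admissibility and the equivalence $\sim$: one must be careful that the formal object $\Phi_{yx}\Phi_{xx}^{-1}$ really defines an element of $\mathcal{F}^{1\times n}$ (real coefficients, no poles on $|z|=1$, which is where $\Phi_{x}(\w)>0$ is essential), and that ``uniqueness of the solution'' is understood modulo $\sim$ on $\mathcal{F}e$ — an issue which, thanks again to $\Phi_{xx}>0$, in fact upgrades to uniqueness of the transfer function $W^{\star}$ itself via the rational-function argument above. Once the inner-product formula and the filtering rule of the Appendix are in hand, the frequency-domain manipulations in the middle two paragraphs are routine.
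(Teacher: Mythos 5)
Your proof is correct, but it is organised differently from the paper's. The paper writes the cost as a frequency-domain integral of $\Phi_{yy}+W\Phi_{xx}W^{*}-\Phi_{xy}W^{*}-W\Phi_{yx}$, minimises the integrand \emph{pointwise in frequency} (a completing-the-square argument at each $\w$, using $\Phi_{xx}(\w)>0$), checks as you do that the resulting frequency response $\Phi_{yx}\Phi_{xx}^{-1}$ is realised by a real-rational filter with no poles on the unit circle, and then obtains the orthogonality relation \eqref{eq:perp hilbert projection thm} by citing the Hilbert projection theorem for pre-Hilbert spaces. You reverse the logical order: you first verify orthogonality by a direct computation of the cross-spectrum $\bigl[\Phi_{yx}-W^{\star}\Phi_{xx}\bigr]W'^{*}\equiv 0$, and then deduce optimality from the Pythagorean identity $\|y-Wx\|^{2}=\|y-W^{\star}x\|^{2}+\|(W^{\star}-W)x\|^{2}$. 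What your route buys is self-containedness and rigour on two points the paper treats lightly: it avoids invoking the projection theorem (whose existence half is delicate in a pre-Hilbert space with a possibly non-closed subspace, a caveat you rightly flag and which the paper sidesteps only implicitly by exhibiting the minimiser), and it actually proves the uniqueness claim — via positive definiteness of $\Phi_{xx}$ on the circle plus the fact that a nonzero rational function cannot vanish on all of $|z|=1$ — where the paper merely asserts it. The paper's pointwise-minimisation argument is marginally shorter and makes the ``minimise the integrand for every $\w$'' intuition explicit. Both proofs hinge on the same admissibility check ($\det\Phi_{xx}\neq 0$ on $|z|=1$ so that $W^{\star}\in\mathcal{F}^{1\times n}$), which you handle correctly.
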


\begin{proof}
	Observe that, since $q\in X$, the cost function satisfies
	\begin{align*}
		&\|y-W(z)x\|^2=\int_{-\pi}^{\pi}\Phi_{yy}(\w)+W(\w)\Phi_{xx}(\w)W^*(\w)+\\
		&\qquad-\Phi_{xy}(\w)W^{*}(\w)-W(\w)\Phi_{yx}(\w)d\w.
	\end{align*}
	The integral is minimized by minimizing the integrand for all $\w\in [-\pi,\pi]$.
	It is straightforward to find that the minimum is achieved for
	\begin{align*}
		W(\w)=\Phi_{yx}(\w)\Phi_{xx}^{-1}(\w).
	\end{align*}
	Defining the filter $W(z)=\Phi_{x x_{I}}(z)\Phi_{x_{I} x_{I}}(z)^{-1}$ a real-rational transfer matrix is obtained with no poles on the unit circle that has the specified frequency response. Thus $\hat x=W(z)x_I\in X$ minimizes the cost (\ref{eq: cost general wiener}).
	Equation (\ref{eq:perp hilbert projection thm}) is an immediate consequence of the Hilbert projection theorem (for pre-Hilbert spaces) \cite{Lue69}.

\end{proof}

\begin{prob}\label{pro2}
The mathematical formulation of the problem can be seen now as the following:
\begin{align}\label{eq:optimization2}
	\min_{\stackrel	{\alpha_{j,1},...,\alpha_{j,m_j}\neq j}
			{W_{j,\alpha_{j,k}}(z)\in \mathcal{F}}
	}
	\left\|
		x_j-\sum_{k=1}^{m_j}W_{j,\alpha_{j,k}}(z)x_{\alpha_{j,k}}
	\right\|^2 \ ,
\end{align}
\end{prob}

\section{Links with compressive sensing} \label{sec:links}
In this section we highlight the connections between the problem of modeling a network topology and the compressive sensing problem.
Such a connection is possible because of the pre-Hilbert structure constructed in Section \ref{sec:geometric} and Appendix. Indeed, the concept of inner product defines a notion of ``projection'' among stochastic processes and makes it possible to seamlessly import tools developed for the compressive sensing problem in order to tackle that of describing a sparsified topology.\\
In the recent few years sparsity problems have attracted the attention of researchers in the area of Signal Processing. The reason is mainly due to the possibility of representing a signal using only few elements (words) of a redundant base (dictionary).
Applications are numerous, ranging from data-compression to high-resolution interpolation, and noise filtering \cite{DauDev09}, \cite{WipNag09}.\\
There are many formalizations of the problem, but one of the most common is to cast it as 
\begin{align}\label{eq:l0 standard}
	\min_{w} \|x_0-\Psi w \|_2
	\quad \text{subject to} \quad
	\|w\|_0 \leq m \ ,
\end{align}
where $n<p$, $x_0\in \R^{p}$, $\Psi\in\R^{p\times n}$ is a matrix, whose columns represent a redundant base employed to approximate $x_0$ and the ``zero-norm'' (it is not actually a norm)
\begin{align}
	\| w \|_{0}:=|\{i\in \N| w_i\neq 0\}|
\end{align}
is defined by the number of non-zero entries of a vector $w$.
It can be said that $w$ is a ``simple'' way to express $x_0$ as a linear combination of the columns of $\Psi$, where the concept of ``simplicity'' is given by a constraint on the number of non-zero entries of $w$.

For each $j=1,...,n$ define the following sets:
\begin{align}
	\mathcal{W}^{(j)}=\{W(z)\in \mathcal{F}^{1\times n}| W_j(z)=0\} \ ,
\end{align}
where $W_j(z)$ denotes the $j$-th component of $W(z)$.
For any $W\in \mathcal{W}^{(j)}$, define the ``zero-norm'' as
\begin{align*}
	\|W\|_0=\{\text{\# of entries such that } \exists~z\in\C, W_i(z)\neq 0  \}
\end{align*}
and define the random vector
\begin{align}
	x=(x_1, ... , x_n)^T.
\end{align}
Then,  the problem (\ref{eq:optimization}) can be formally cast as
\begin{align}\label{eq:l0 our way}
	\min_{W\in\mathcal{W}_j} \|x_j-W x \|^2
	\quad \text{subject to} \quad
	\|W\|_0 \leq m
\end{align}
which is, from a formal point of view, equivalent to the standard $l_0$ problem as defined in (\ref{eq:l0 standard}).
\section{Solution via suboptimal algorithms}\label{sec:solution via suboptimal}
The problem of modeling network interconnections/complexity reduction we have formulated in this paper is equivalent to the problem of determining a sparse Wiener filter, as explained in the previous section, once a notion of orthogonality is introduced.
This formal equivalence shows how deriving a suitable topology can immediately inherit a set of practical tools already developed in the area of compressing sensing.\\
Here we present, as illustrative examples, modifications of algorithms and strategies, well-known in the Signal Processing community, which can be adopted to obtain suboptimal solutions to the problem of modeling the network interconnections.\\
While formally identical to \eqref{eq:l0 standard}, the problem of a topology reconstruction cast as in \eqref{eq:l0 our way} still has its own characteristics. Since the ``projection'' procedure in \eqref{eq:l0 our way} is given by the estimation of a Wiener filter, it is computationally more expensive than the standard projection in the space of vectors of real numbers.
For this reason greedy algorithms offer a good approach to tackle the problem since speed becomes a fundamental factor. Moreover, since the complexity of the network model is here one of final goal, greedy algorithms are a suitable solution, since they allow one to specify explicitly the connection degree $m_j$ of every node $x_j$. This feature is in general not provided by other algorithms.
As an alternative approach to greedy algorithms we also describe a strategy based on iterated reweighted optimizations as described in \cite{CanWak08}.
\subsection{A modified Orthogonal Least Squares (Cycling OLS)}
Orthogonal Least Squares (OLS) is a greedy algorithm proposed for the first time in \cite{CheBil89} and in many ways it resembles the algorithm of Matching Pursuit developed in \cite{MalZha93}.
It basically consists of iterated orthogonal projections on elements  of a (possibly redundant) base to approximate a given vector.
For the details of this algorithm we remand the reader to \cite{CheBil89}.
However, for the sake of clarity, we reformulate it in terms of our problem.
The initialization occurs at the first step setting the set of the chosen elements of the dictionary to $\Gamma^{(1)}=\emptyset$.
At the $l-$th iteration step, OLS determines the term $\hat x_j^{(l,i)}$ to be added to the reduced dictionary by projecting $x_j$ onto the space generated by $\Gamma^{(l,i)}:=\Gamma^{(l-1)}\cup \{x_i\}$ for any $i\neq j$.
Then $\Gamma^{(l)}$ is defined as the $\Gamma^{(l,i)}$ for which $\|x_j-\hat x_{j}^{(l,i)}\|$ is the smallest and the the algorithm moves to the next iteration step.
The standard OLS goes on at every step introducing a new vector until a stopping condition is met (usually on the norm of the residual $r^k$ or on the number of iterations).\\
We propose an algorithm which derives directly from OLS but it does not increase the number of vectors $x_{\alpha_{j,k}}$ approximating $x_j$ above $m_j$.
The variation from OLS is very simple. At any iteration, given the set of vectors $\Gamma^{(l-1)}$, if it already contains $m_j$ vectors, the algorithm chooses a vector in $\Gamma^{(l-1)}$ to be removed and tries to replace it with another vector in order to improve the quality of the approximation and updates it. If such an improvement is not possible by removing any of the vectors in the current selection, the algorithm stops. The implementation can be described using the following pseudocode.\\
~\\
{\tt Cycling Orthogonal Least Squares: }
\begin{itemize}
	\item[{\tt 0.}] define $x_0:=0$ (null time series) and $c=0$.
	\item[{\tt 1.}] initialize the $m_j$-tuple  $S=(x_0, x_0 ..., x_0)$ and $k=1$
	\item[{\tt 2.}] while $c \leq m_j$
	\begin{itemize}
		\item[{\tt 2a.}] for $i=1,...,n$, $i\neq j$\\
		define $S_i$ as the $m_j$-tuple where $x_i$ replaces the $k$-th element of $S$ and \\
		define $\hat x_j^{(i)}$ as the projection of $x_j$ on to $S_i$
		\item[{\tt 2b.}] $\alpha = \arg\max_{i} \|x_j-\hat x_j^{(i)}\|$
		\item[{\tt 2c.}] if $x_{\alpha} = S[k]$ then $c=c+1$
		\item[{\tt 2d.}] else $S[k]=x_\alpha$, $c=1$, $k=k$ mod $m_j$, $k=k+1$
	\end{itemize}
	\item[{\tt 3.}] return $S$
\end{itemize}
The reason of our modification is simple. COLS implements a coordinate descent guaranteeing that the number of non-zero components of the solution does not exceed $m_j$. Once such a limit has been reached, it tries to improve the quality of the approximation without reducing the sparsity of the current solution.
\subsection{Solution via Re-Weighted Least Squares (RWLS)}
Another possible approach to ``encourage'' sparse solutions is provided by reweighted minimization algorithms as proposed in \cite{CanWak08} and \cite{DauDev09}.
A comparison between reweighted norm-$1$ and norm-$2$ methods is performed in \cite{WipNag09}. We consider only reweighted least squares, because such an algorithm is easier to implement, but the intuition behind the two techniques is basically the same.\\
Using Parseval theorem, Problem \ref{eq:optimization}, can be formulated as
\begin{align}\label{eq:optimization frequency}
	\min_{\stackrel	{\alpha_{j,1},...,\alpha_{j,m_j}\neq j}
			{W_{j,\alpha_{j,k}}(z)\in \mathcal{F}}
	}
		\int_{-\pi}^{\pi}\Phi_{[x_j-\sum_{k=1}^{m_j}W_{j,\alpha_{j,k}}(\w)x_{\alpha_{j,k}}]}(\w)d\w.
\end{align}
Consider the following convex variation of the problem
\begin{align}\label{eq:optimization l2 weighted}
	& \min_{W_{j,k}(z)\in \mathcal{F}}
	\left\{
		\int_{-\pi}^{\pi}\Phi_{[x_j-\sum_{k\neq j}W_{j,k}(\w)x_{k}]}(\w)
	\right\}\\
	& \text{subject to} \nonumber\\
	& \sum_{k=1}^{n}\int_{-\pi}^{\pi}\mu_k W_{j,k}^{*}(\w)W_{j,k}(\w)d\w\leq 1 \nonumber
\end{align}
where $\mu_k\in\R^{n}$ is a set of weights for the filters $W_{j,k}(z)$.
Using the compact notation introduced in Section~\ref{sec:links}, we can equivalently write
\begin{align}\label{eq:optimization l2 reweighted}
	\min_{W\in \mathcal{W}_j}\|x_{j}-Wx\|^2
	\quad \text{subject to} \quad
	\|W\|^2_{\mu}\leq 1
\end{align}
where, for a vector $\mu=(\mu_1,...,\mu_n)$, we define
\begin{align*}
	\|W\|^{2}_{\mu}:=\frac{1}{m_j}\sum_{1}^{n}
		\int_{-\pi}^{\pi}\mu_k W_{j,k}^{*}(\w)W_{j,k}(\w)d\w\leq 1.
\end{align*}
Let us assume that the $\alpha_{j,k}$'s and the relative $W_{\alpha_{j,k}}$ solving (\ref{eq:optimization}) are known. Technically, we could set 
\begin{align}\label{eq: l2 good weights}
	\mu_l:=\frac{1}{m_j}\left(\int_{-\pi}^{\pi} W_{l}^{*}(\w)W_{l}(\w)d\w\right)^{-1} \ ,
\end{align}
if $l=\alpha_{j,k}$ for some $k=1,...,m_j$ and $\mu_l=+\infty$ otherwise. With such a choice of weights, the two problems \eqref{eq:optimization} and \eqref{eq:optimization l2 reweighted} would be equivalent since they would provide the same solutions.
However, Problem \eqref{eq:optimization l2 reweighted} has the advantage of being convex.
Of course, the values $\alpha_{j,k}$ are not a-priori known, thus it is not possible  to evaluate \eqref{eq: l2 good weights}.
An iterative approach has been proposed making use of the intuition that we have just formulated to estimate the weights \eqref{eq: l2 good weights}.\\
~\\
{\tt Reweighted Least Squares: }
\begin{itemize}
	\item[{\tt 0.}] For all $x_j$
	\item[{\tt 1.}] initialize the weight vector $\mu:=0$
	\item[{\tt 2.}] while a stop criterion is met
	\begin{itemize}
		\item[{\tt 2a.}] solve the convex problem
		\begin{align*}
			\min_{W\in \mathcal{W}_j}\|x_{j}-Wx\|^2
			\quad \text{subject to} \quad
			\|W_j\|^2_{\mu}\leq 1
		\end{align*}
		\item[{\tt 2b.}] compute the new weigths
		\begin{align*}
			\mu_{k}=\frac{1}{m_j}
			\int_{-\pi}^{\pi}\|W_{j}(\w)\|d\w
		\end{align*}
	\end{itemize}
	\item[{\tt 3.}] return all the $W_j$'s.
\end{itemize}
At any iteration the convex relaxation of the problem is solved and new weights are computed as a functions of the current solution.
When a stopping criterion is met (usually on the number of iterations), the final solution can be obtained by selecting the $m_j$ largest entries of each $W_j$.
\section{Applications and examples} \label{sec:examples}
In this section we report numerical results obtained implementing the algorithms described in the previous section.
In order to evaluate the performances provided by the two algorithms (COLS and RWLS) we have considered a network of~$20$ nodes as represented in Figure~\ref{fig:results}(true) .
In the graph every node $N_j$ describes a stochastic process $x_j$, while every directed arc form a node $N_i$ to a node  $N_j$ represents a transfer function $H_{ji}(z)\neq 0$ that has been randomly selected from a class of causal FIR filters of order~$5$.
The absence of such an arc implies that $H_{ji}(z)=0$.
Thus, each process $x_j$ follows the dynamics
\begin{align}
	x_j=e_j+\sum_{i\neq j}H_{ji}(z)x_i \ .
\end{align}
Every node signal is also implicitly considered affected by an additive white Gaussian noise $e_j$ such that the Signal to Noise Ratio (SNR) is~$4$ (a very noisy scenario).
All the noise processes are independent from each other.
The network has been simulated for~$2000$ steps obtaining~$20$ time series. The time series have been employed to estimate a non-causal FIR approximation of the Wiener Filters of order $21$ both in the COLS and in the RWLS algorithm. 
In Figure~\ref{fig:results} we  report the results of the identification.
Using a global search the global minima of (\ref{eq:l0 standard}) provide the topologies in Figure~\ref{fig:results}~(reduced 2) and  Figure~\ref{fig:results}~(reduced 3) for the case $m_j=2$ and $m_j=3$ for each node respectively.
In Figure~\ref{fig:results}~(no reduction) we report the topology obtained with no constraint on the maximum number of edges: a small threshold has been introduced to remove any edge $(N_i, N_j)$ associated with $W_{ji}(z)\simeq 0$.
In the second row of graphs we have the results for COLS for the cases $m_j=1$, $m_j=2$ and $m_j=3$. In Figure~\ref{fig:results}~(COLS variable) we report the result given by the implementation of a strategy to automatically determine the number of edges: the number of edges is increased only if gives a reduction of~$20\%$ of the residual error.
In the third row of graphs, we report the analogous results for the RWLS algorithm.
\begin{figure*}
	\begin{tabular}{cccc}
		\includegraphics[width=0.22\textwidth]{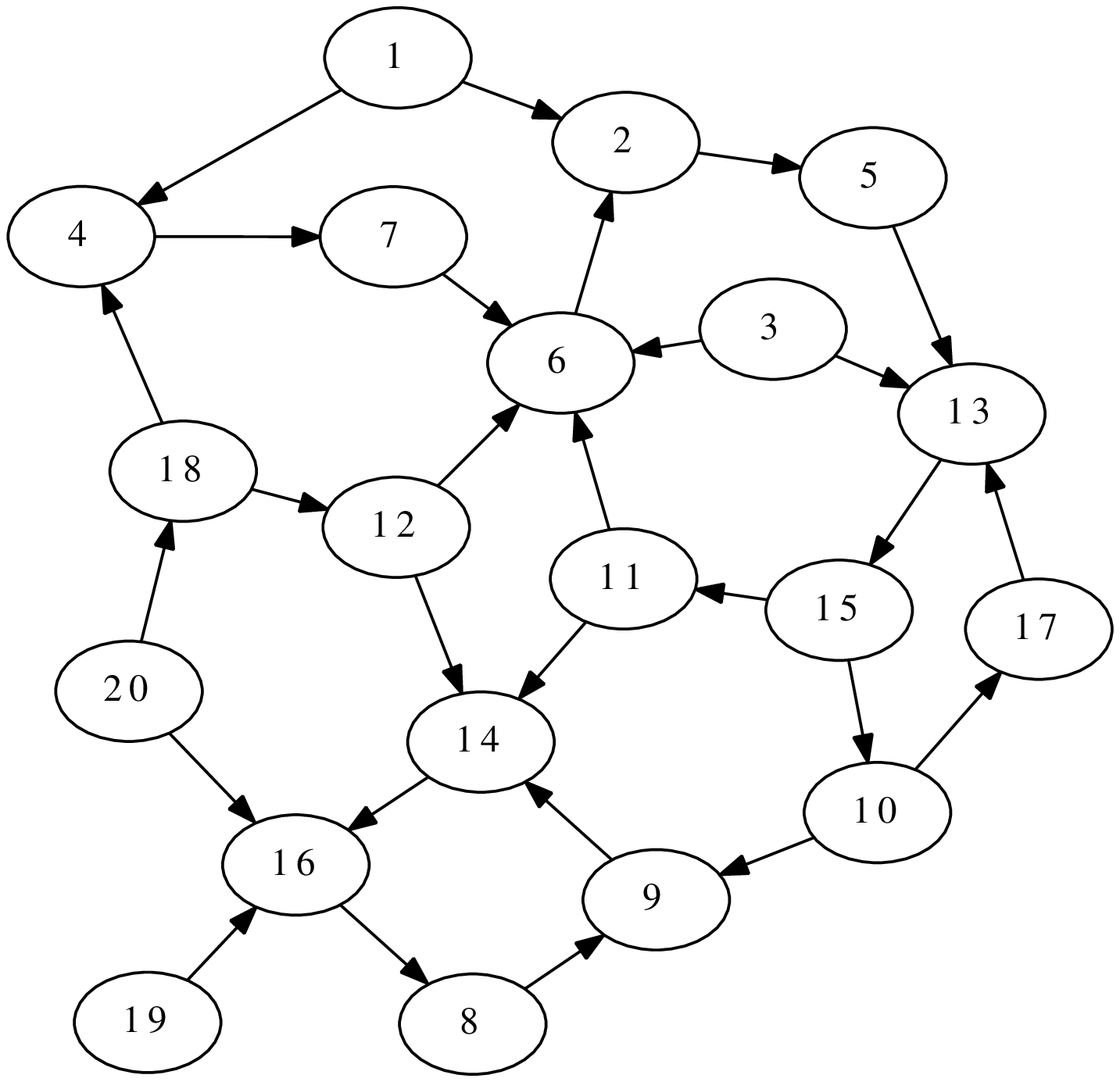} &
		\includegraphics[width=0.22\textwidth]{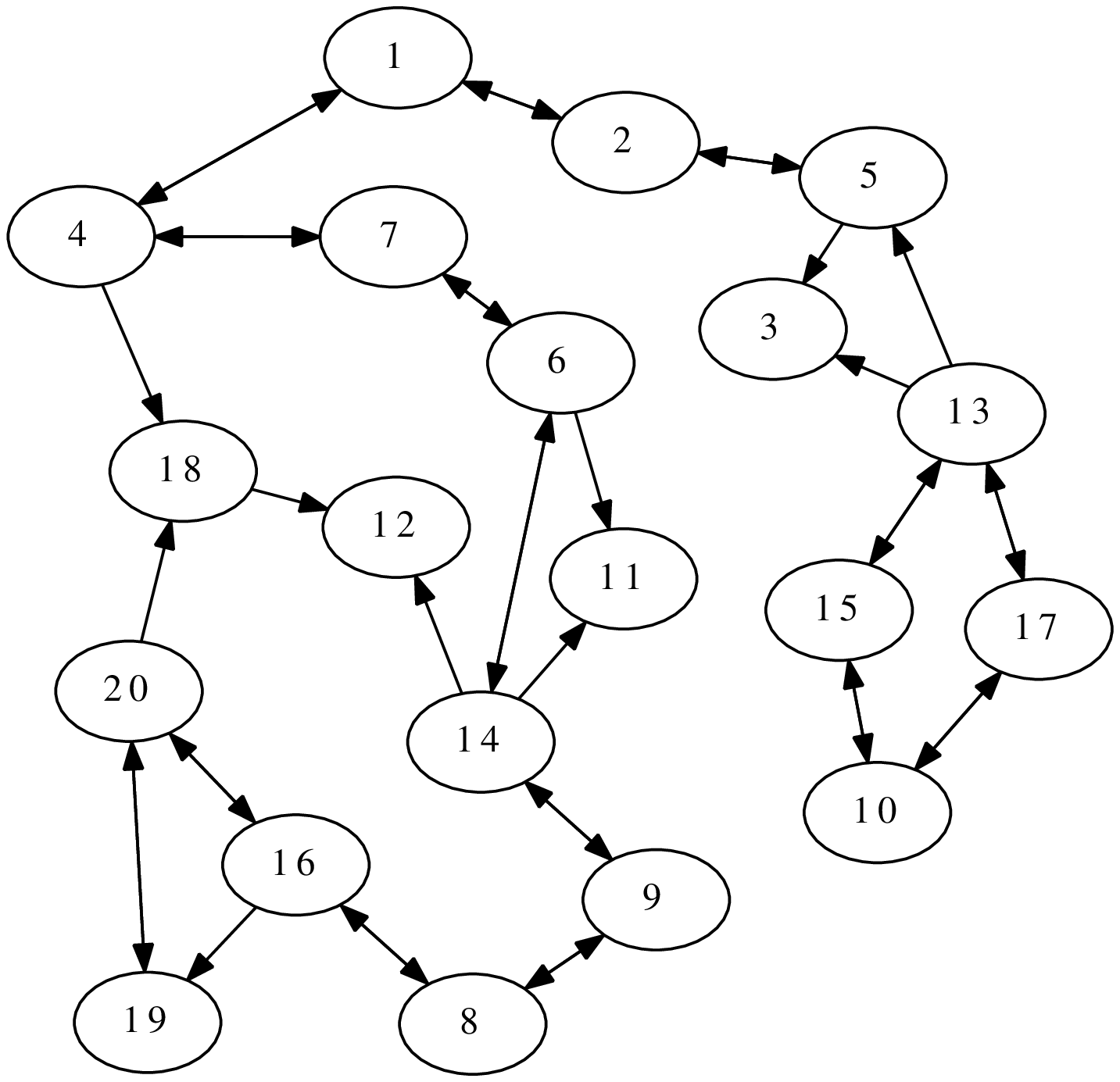} &
		\includegraphics[width=0.22\textwidth]{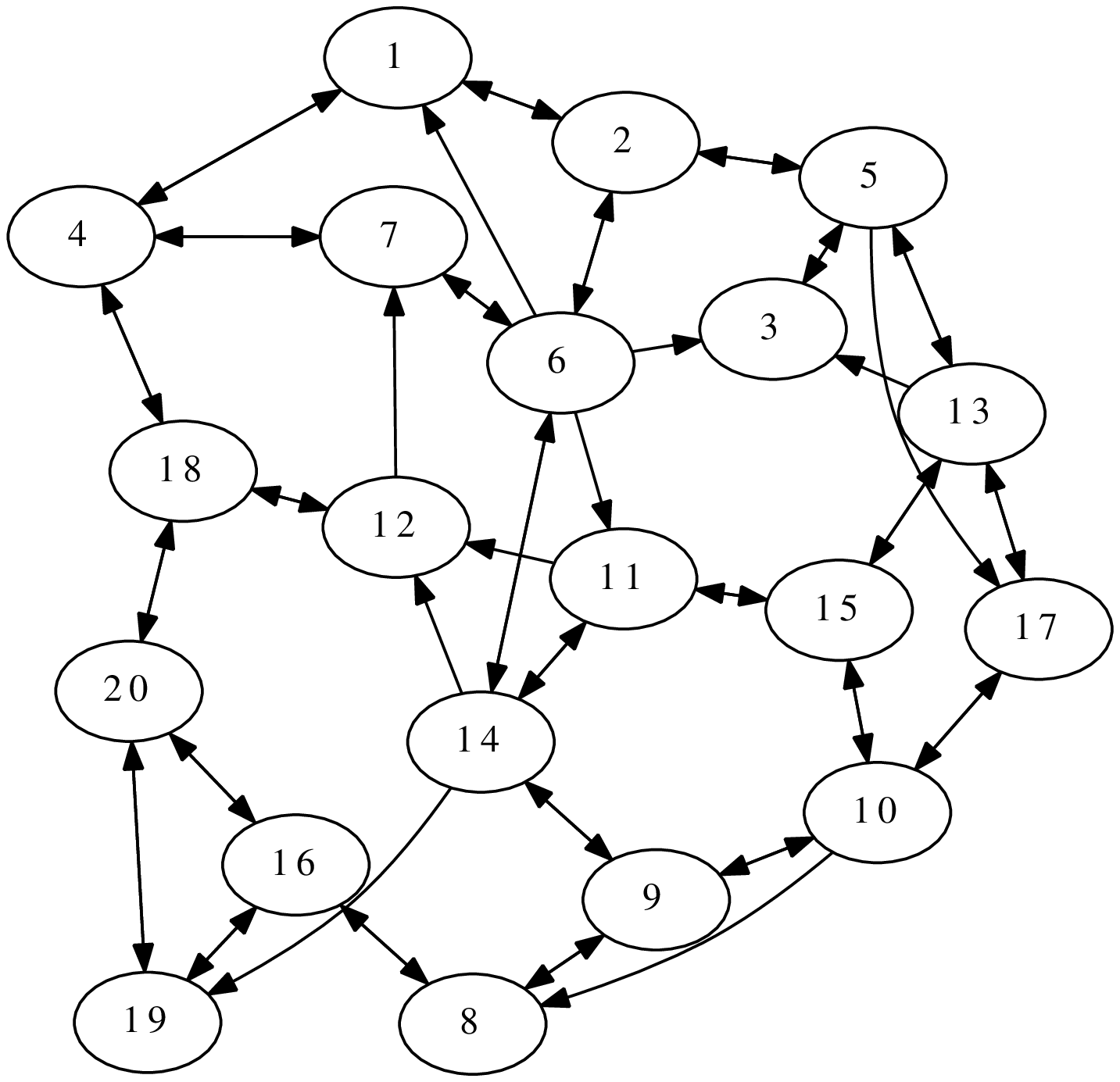} &
		\includegraphics[width=0.22\textwidth]{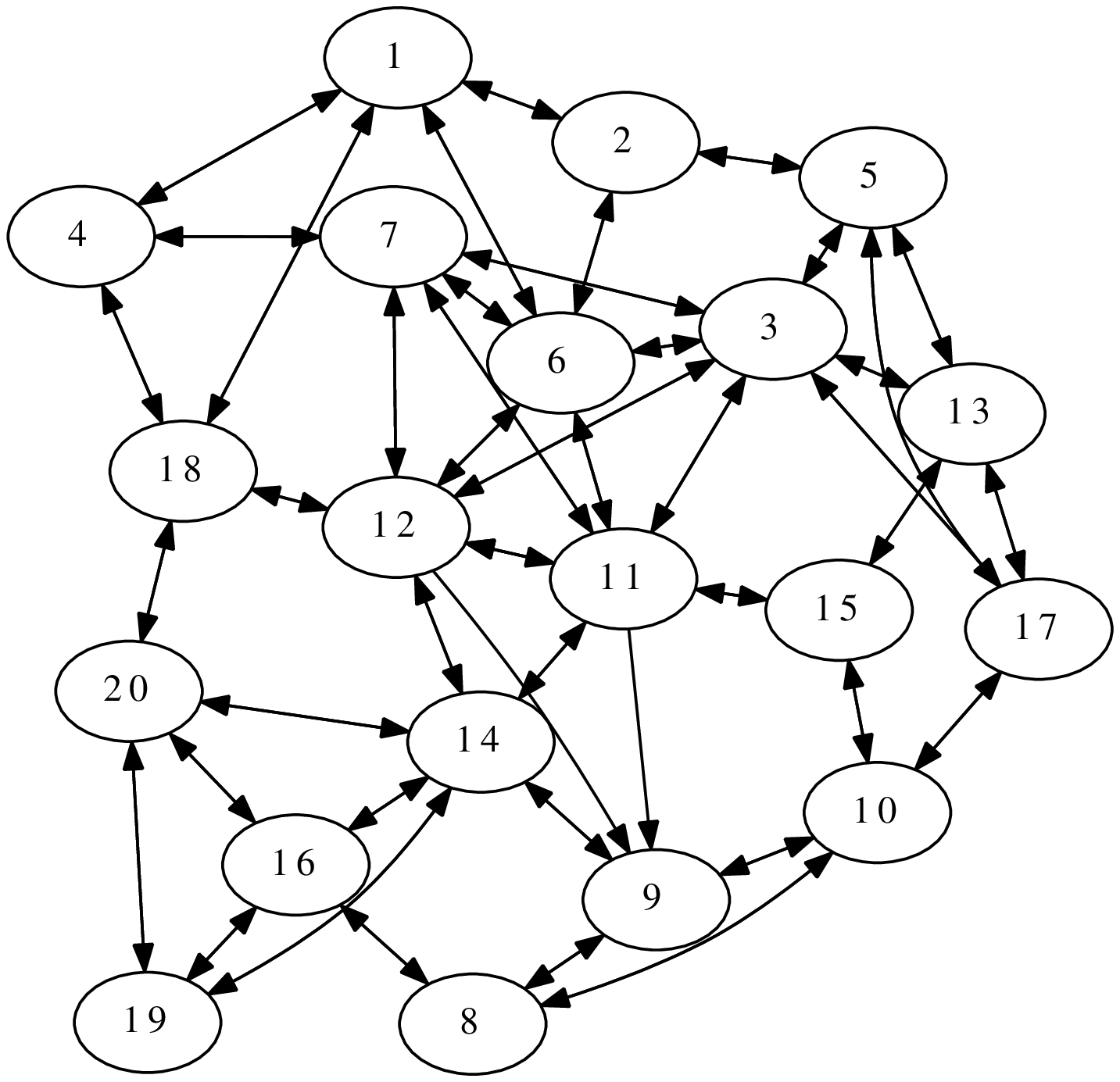}\\
		(true) & (reduced 2) & (reduced 3) & (no reduction)\\
		\includegraphics[width=0.22\textwidth]{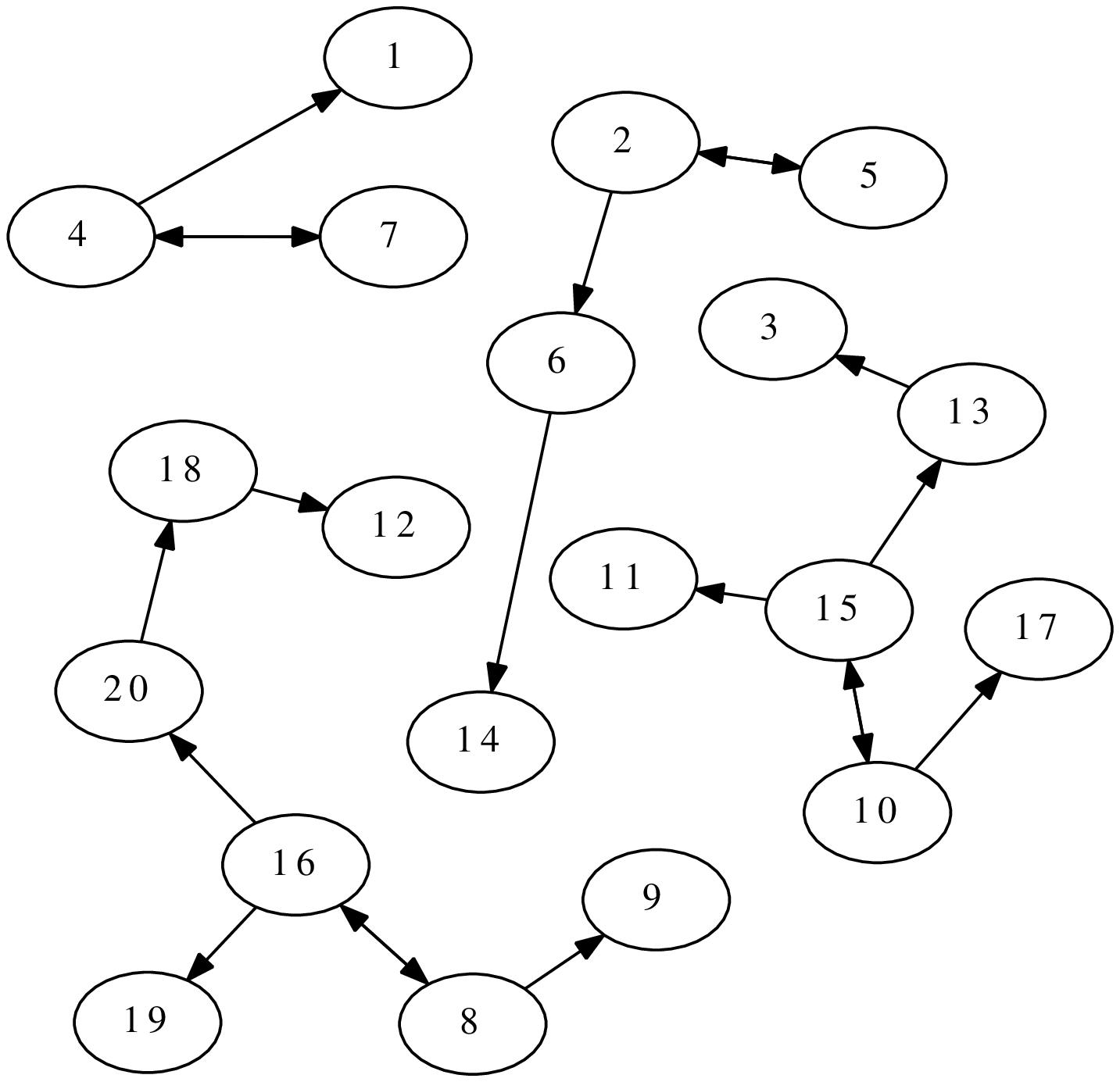} &
		\includegraphics[width=0.22\textwidth]{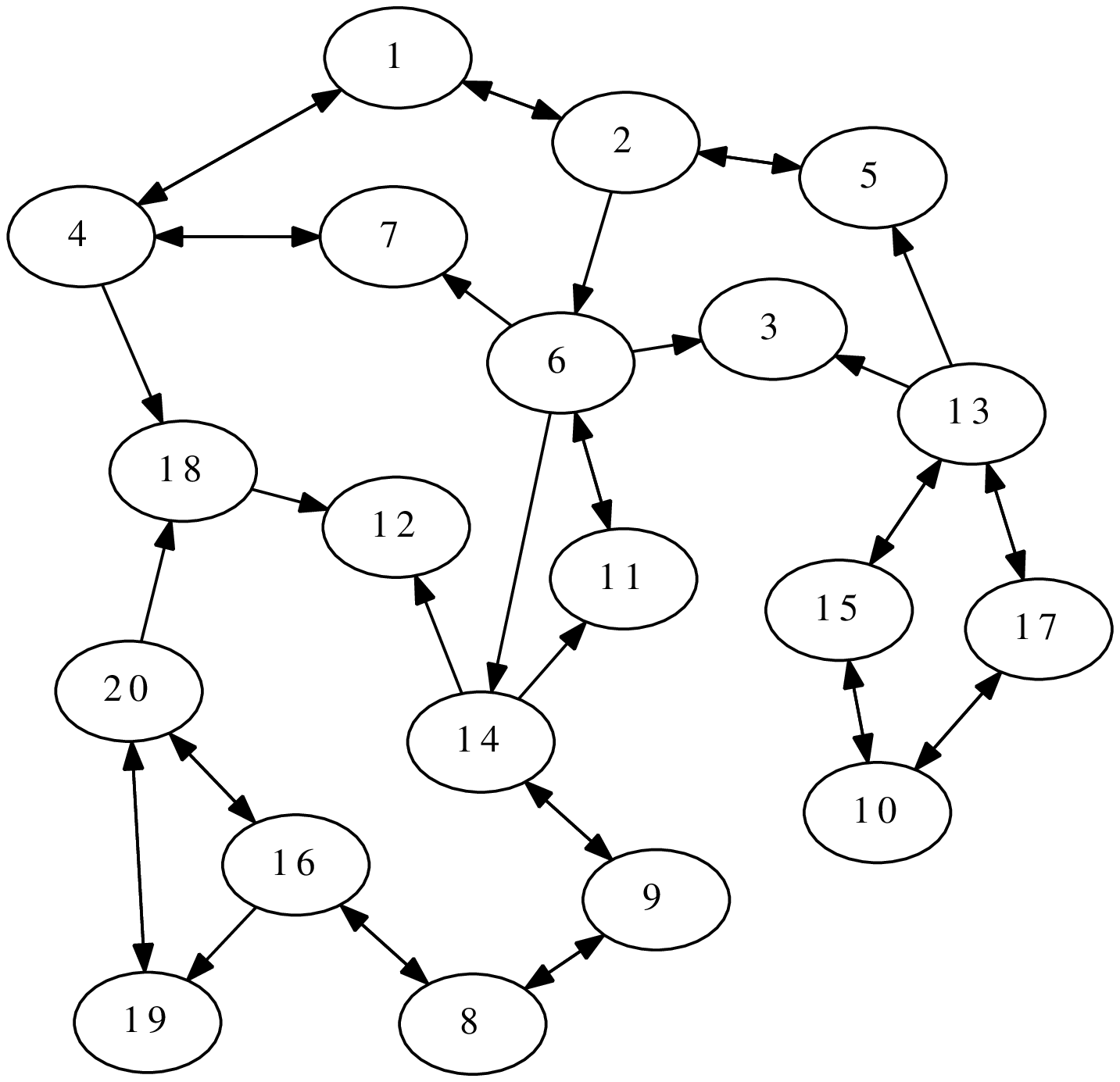} &
		\includegraphics[width=0.22\textwidth]{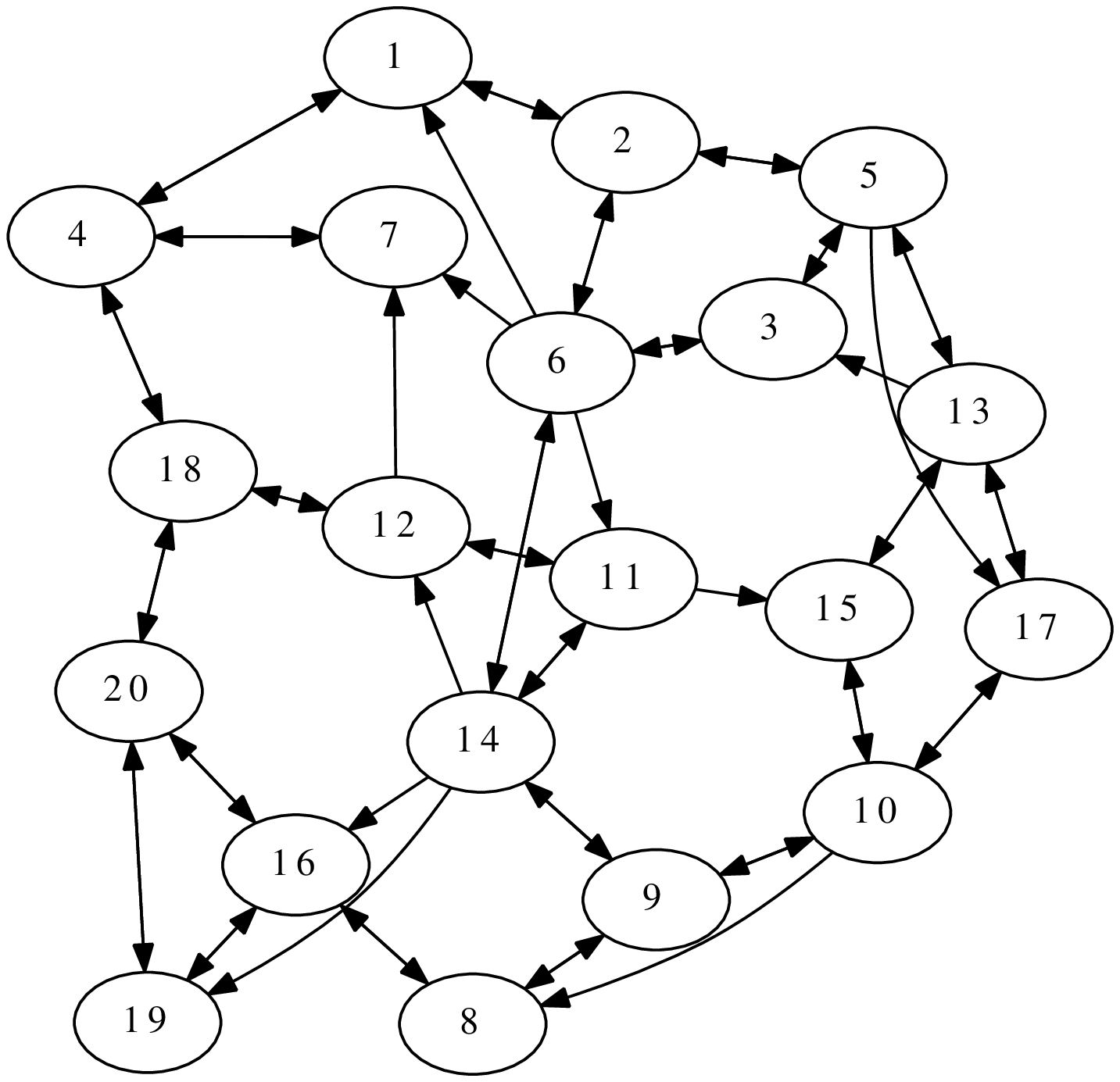} &
		\includegraphics[width=0.22\textwidth]{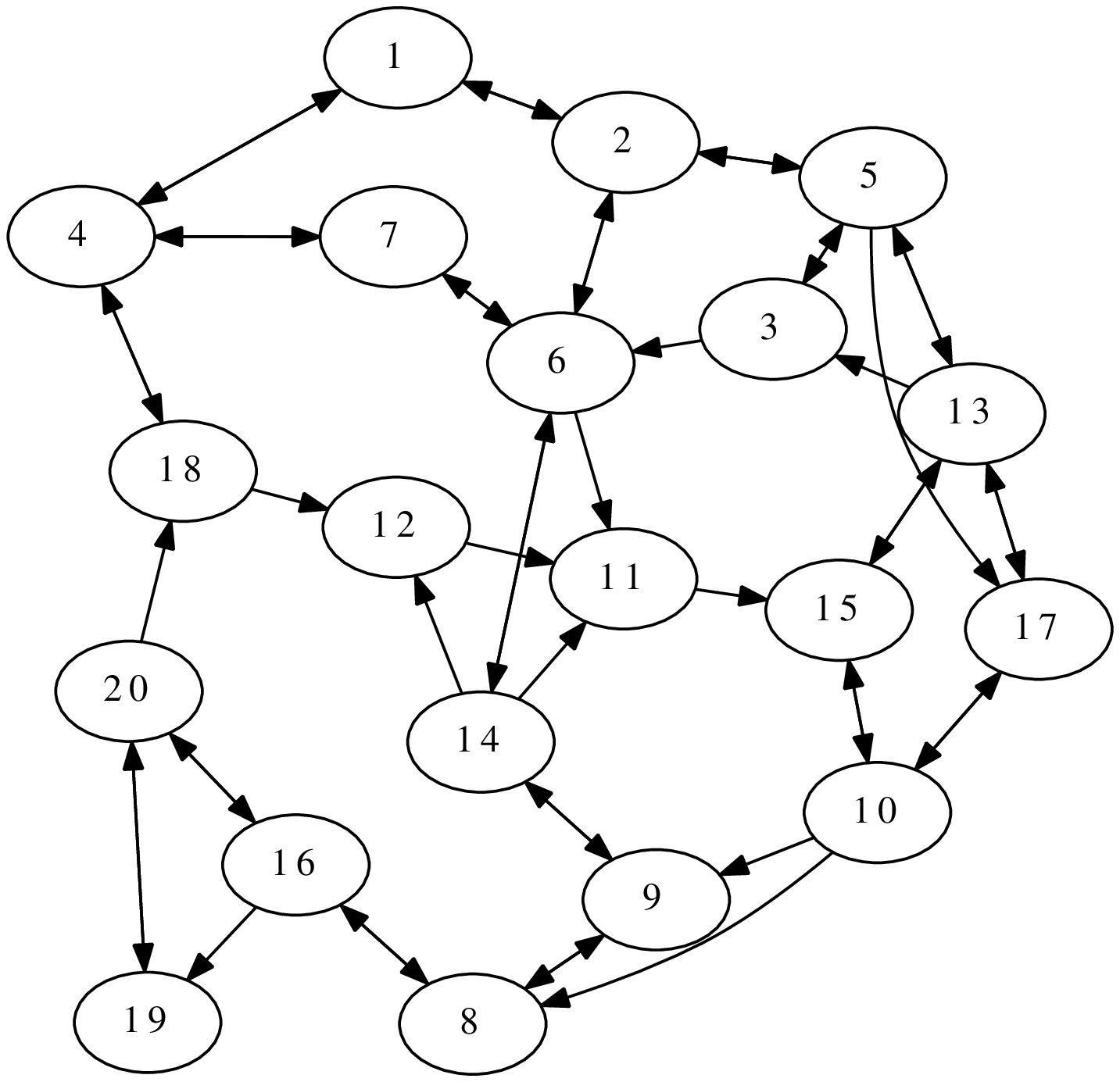}\\
		(COLS 1) & (COLS 2) & (COLS 3) & (COLS variable)\\
		\includegraphics[width=0.22\textwidth]{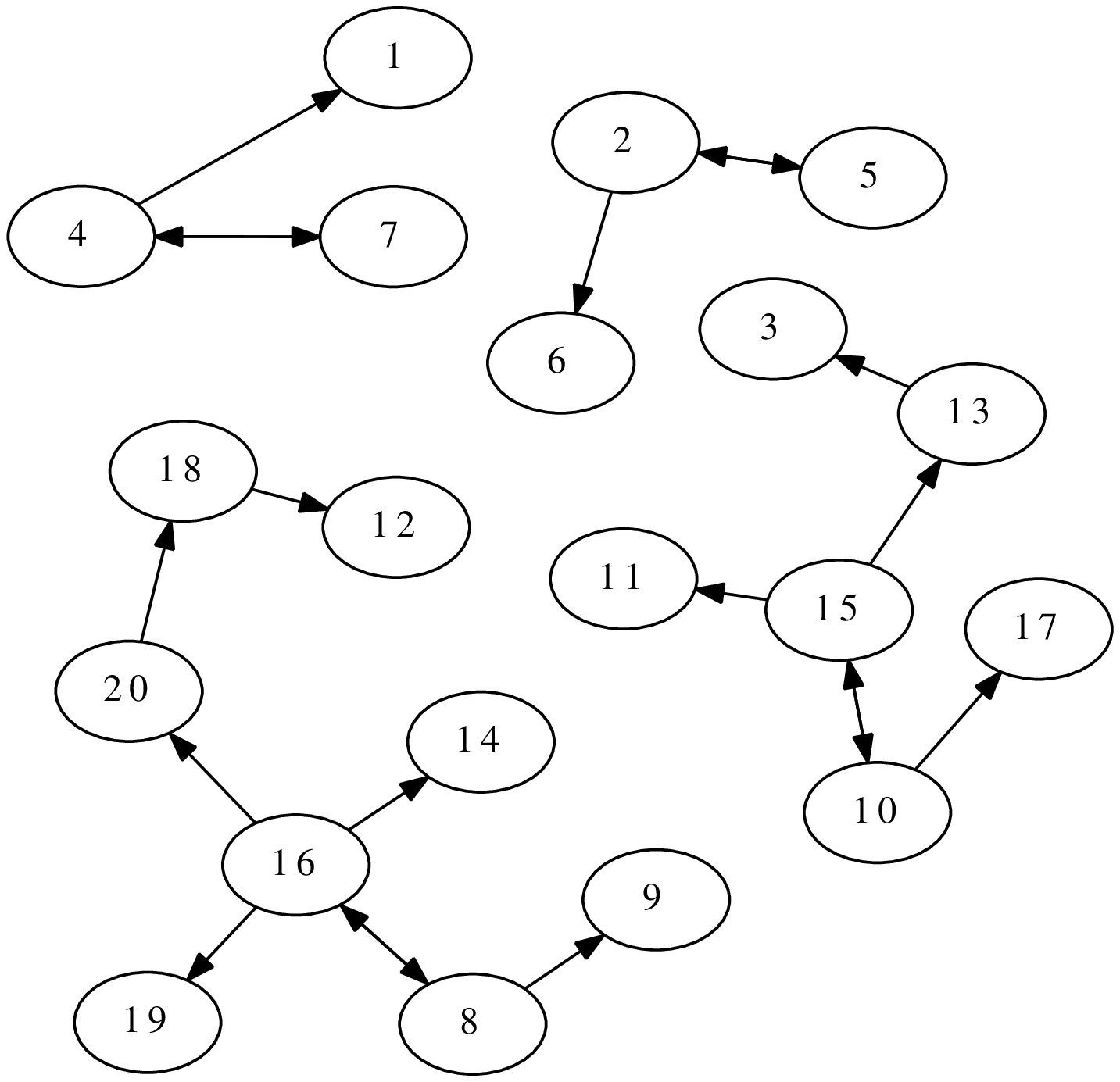} &
		\includegraphics[width=0.22\textwidth]{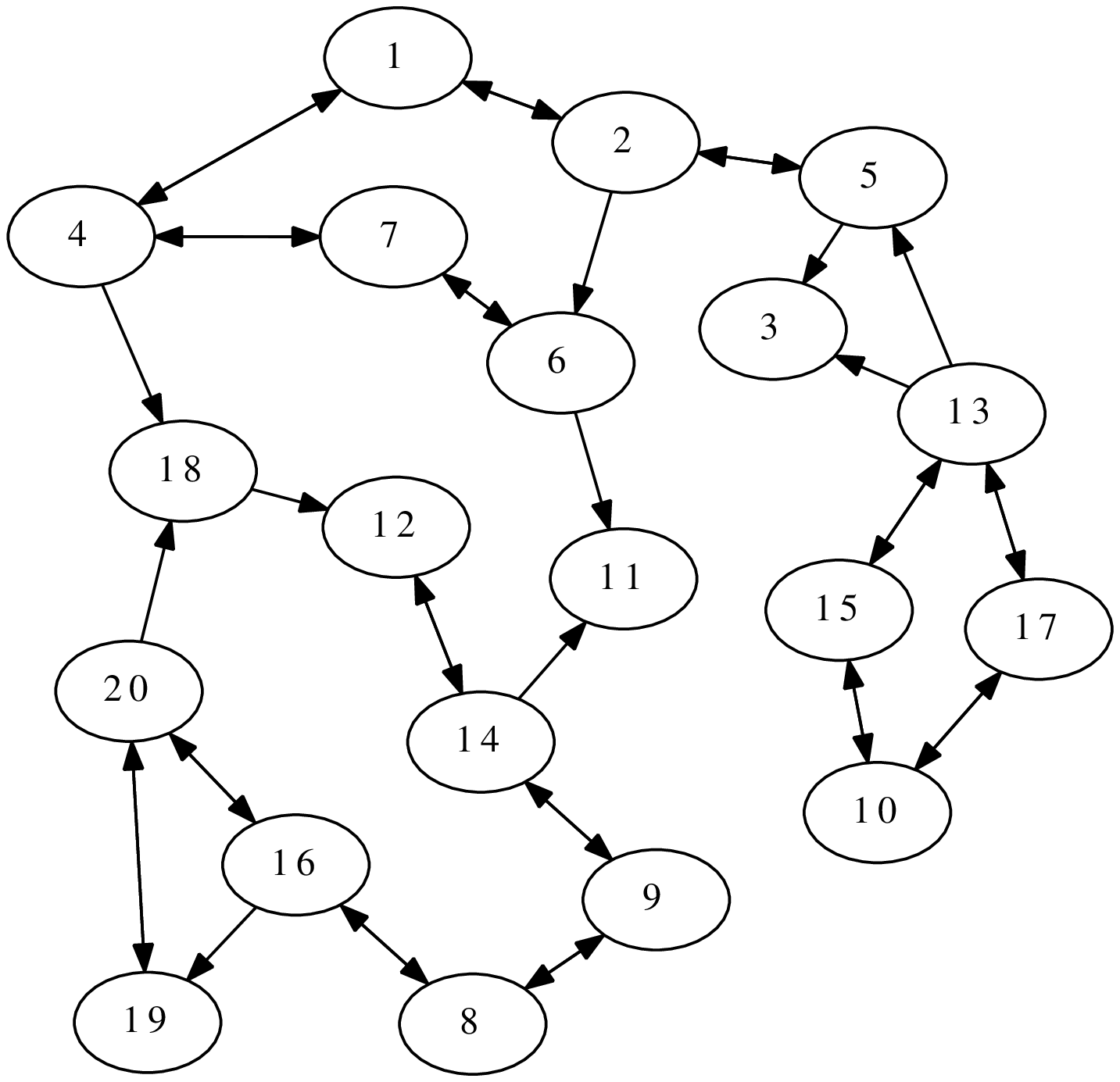} &
		\includegraphics[width=0.22\textwidth]{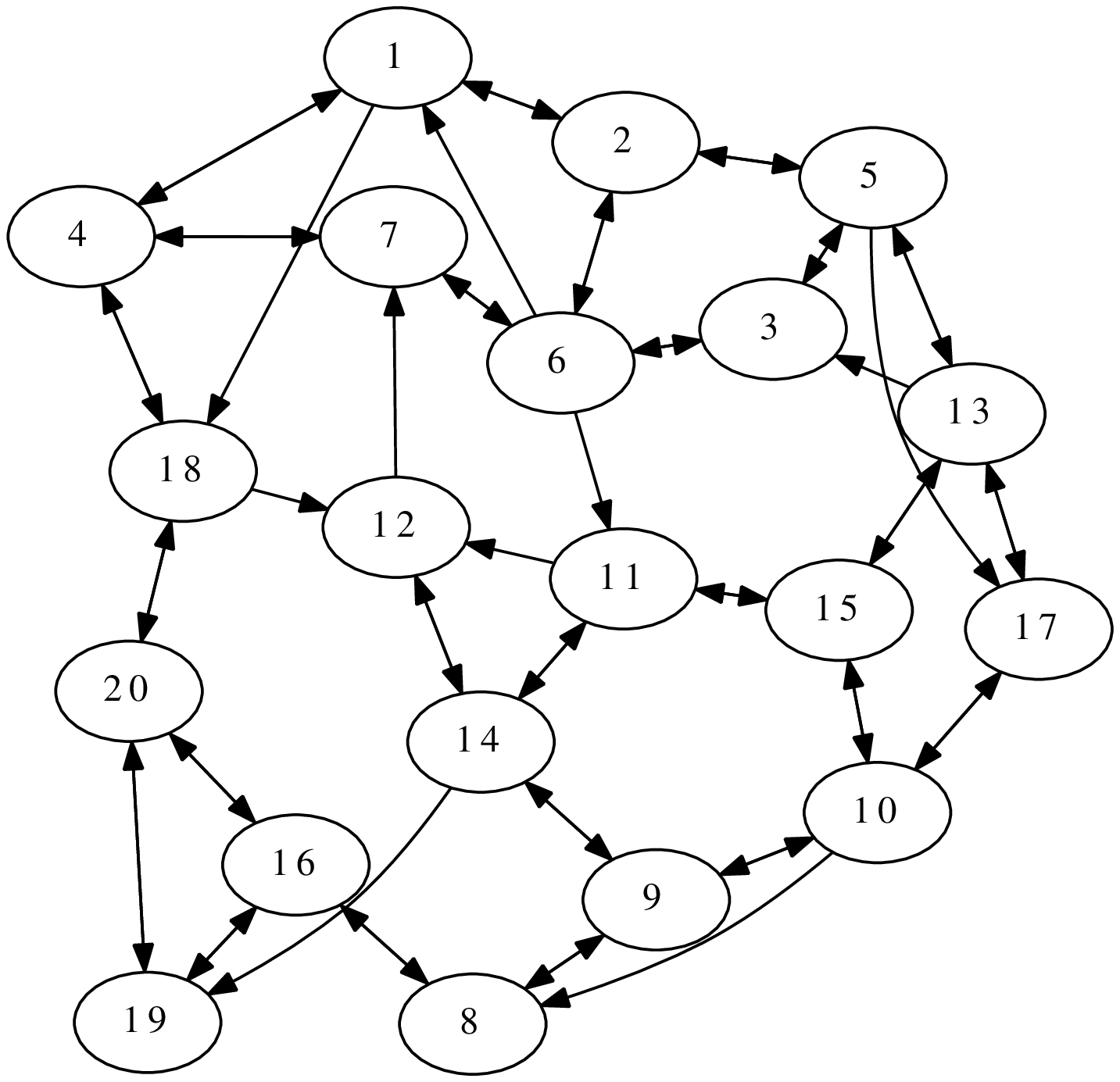} &
		\includegraphics[width=0.22\textwidth]{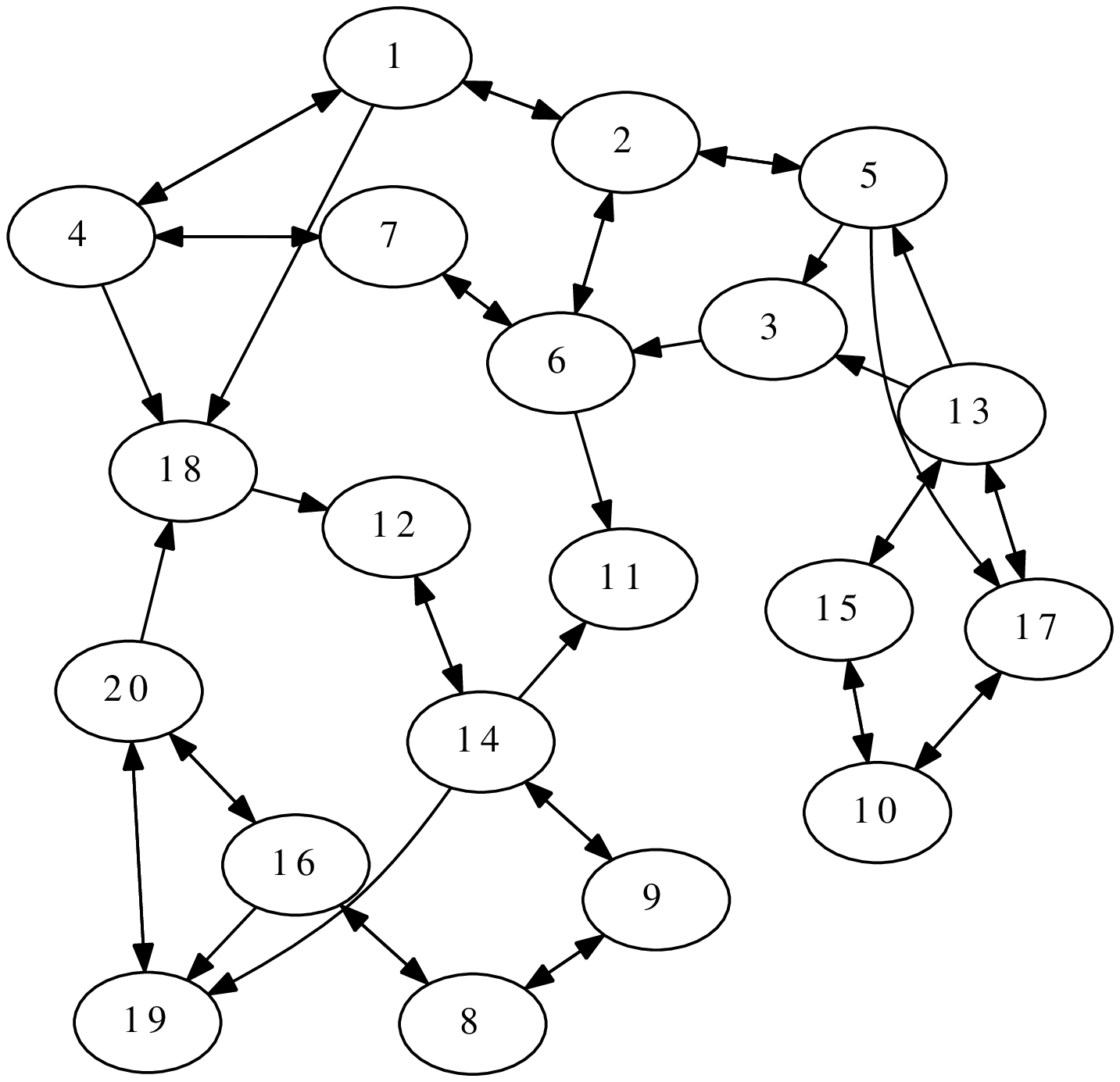}\\
		(RWLS 1) & (RWLS 2) & (RWLS 3) & (RWLS variable)\\
	\end{tabular}
	\caption{The actual network topology (true); the topology obtained by a global minimization of (\ref{eq: cost general wiener}) with $m_j=2$ for every node (reduced 2); the topology with $m_j=3$ for every node (reduced 3); the topology with no constraint on $m_j$ but with a ``small'' threshold imposed on a norm of the Wiener Filters to avoid a complete graph (no reduction); the topologies obtained used the COLS suboptimal approach with $m_j=1$ (COLS 1), $m_j=2$ (COLS 2), $m_j=3$ (COLS 3), and a self-adjusting strategy for $m_j$ (COLS variable): a link is introduced if gives at least a reduction of~$20\%$ of the residual error; the topologies obtained by RWLS after $10$ iterations and keeping only the $m_j$ filters with largest norm (RWLS 1), (RWLS 2), (RWLS 3), or a self-adjusting strategy (RWLS variable).
	\label{fig:results}}
\end{figure*}
\subsection{An application to real data: currency exchange rates}
In this section we also present the results obtained by applying our technique to real data.
We have considered the daily exchange rate of~$22$ selected currencies (reported in Table~\ref{tab:currencies}) from the last~$7$ years providing $1715$ samples for any of the time series.
The missing data (the exchange rate on Saturdays and Sundays) have been interpolated (by cubic splines) such that a total number of~$2400$ daily points have been obtained for our analysis.
\begin{table}[tb]
	\centering
	\begin{tabular}{|c|c|c|}
		\hline
		Name & Code & Country \\
		\hline \hline
	 	Australian Dollar	&AUD	&Australia\\ \hline 
	 	Brazil Real		&BRL	&Brazil\\ \hline 
	 	Canadian Dollar		&CAD	&Canada\\ \hline 
		Chinese Renminbi	&CNY	&China\\ \hline
		Danish Krone		&DKK	&Denmark\\ \hline
	 	Euro			&EU	&European Union\\ \hline 
		British Pound		&GPB	&Great Britain\\ \hline
		Hong Kong Dollar	&HKD	&Hong Kong\\ \hline
		Indian Rupee		&INR	&India\\ \hline
	 	Japanese Yen		&JPY	&Japan\\ \hline 
		South Korean Won	&KRW	&South Korea\\ \hline
		Sri Lankan rupee	&LKR	&Sri Lanka\\ \hline
		Mexican Peso		&MXN	&Mexico\\ \hline
		Malaysian Ringgit	&MYR	&Malaysia\\ \hline
		Norwegian Krone		&NOK	&Norway\\ \hline
		New Zealand Dollar	&NZD	&New Zealand\\ \hline
		Swedish Krona		&SEK	&Sweden\\ \hline
		Singapore Dollar	&SGD	&Singapore\\ \hline
		Thai Baht		&THB	&Thailand\\ \hline
		Taiwanese Dollar	&TWD	&Taiwan\\ \hline
	 	American Dollar		&USD	&United States of America\\ \hline 
		South African Rand	&ZAR	&South Africa\\ \hline
	\end{tabular}
	\caption{List of the currencies considered in the analysis. \label{tab:currencies}}
\end{table}
The Cycling OLS algorithm has been applied on the logarithmic returns of the time series (a standard procedure in Finance,) with order 1,2 e 3 and the estimated topologies are depicted in Figure~\ref{fig:real}~a, Figure~\ref{fig:real}~b and Figure~\ref{fig:real}~c.
\begin{figure*}
	\centering
	\begin{tabular}{ccc}
	\includegraphics[width=0.31\textwidth]{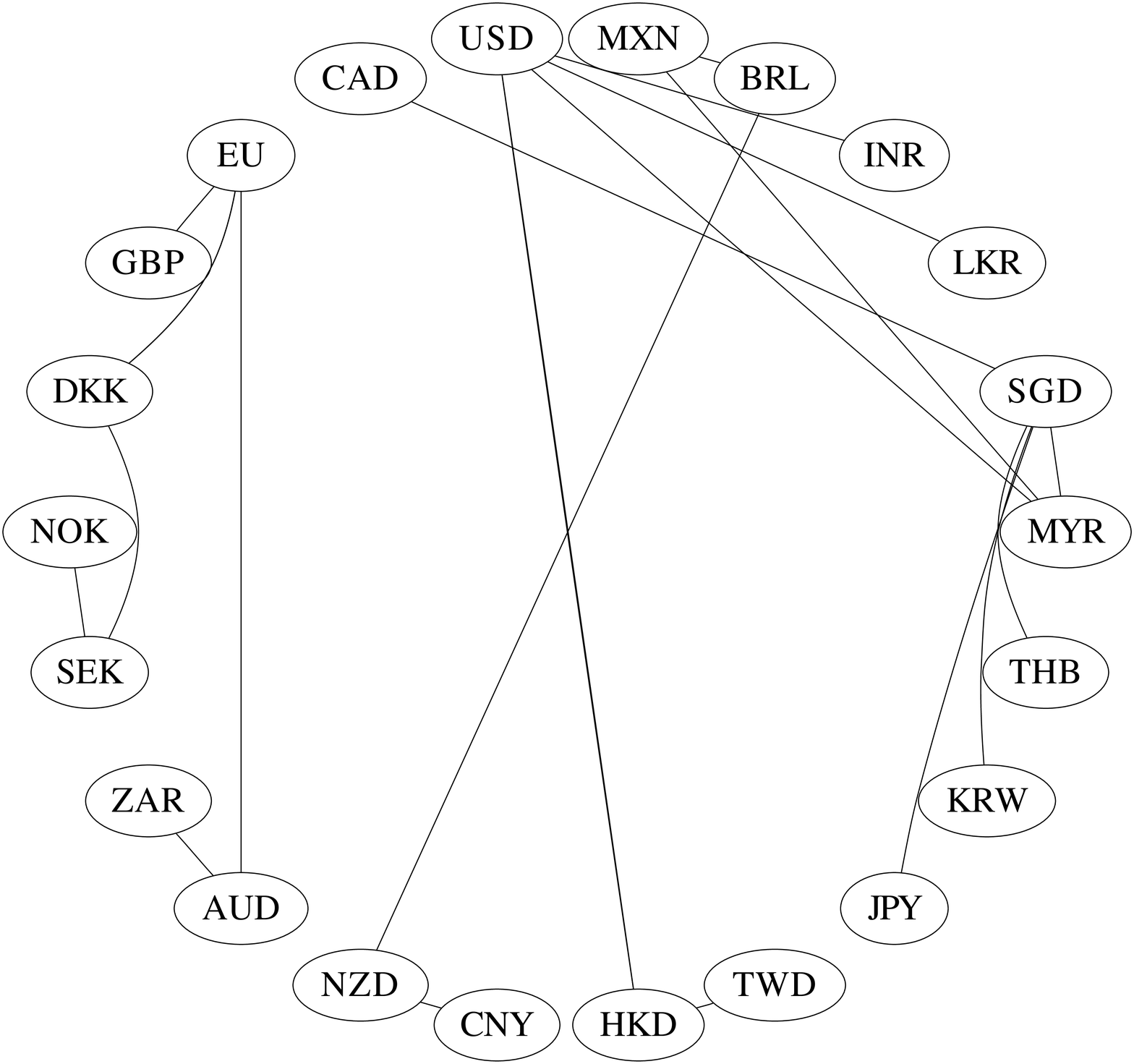} &
	\includegraphics[width=0.31\textwidth]{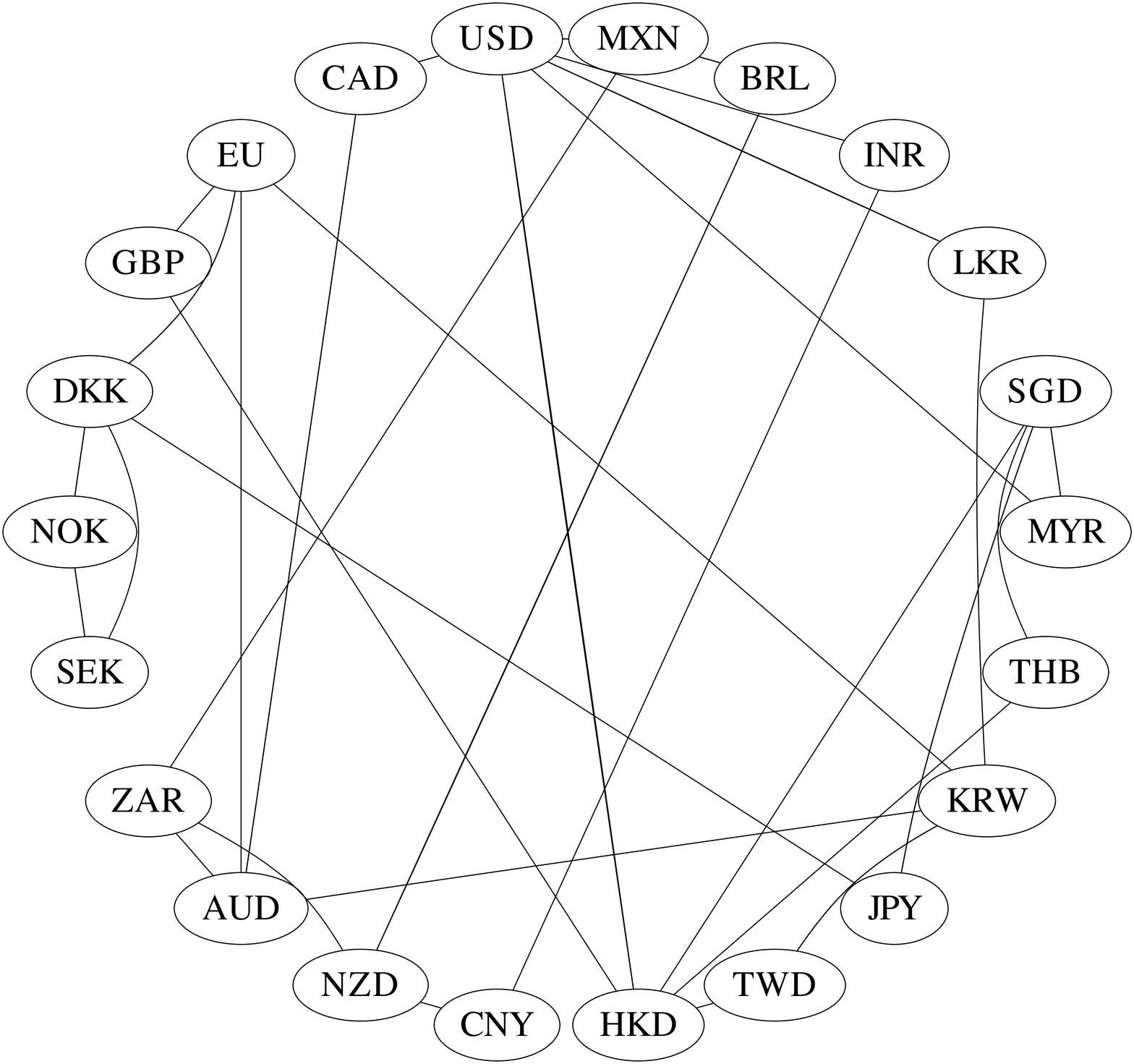} &
	\includegraphics[width=0.31\textwidth]{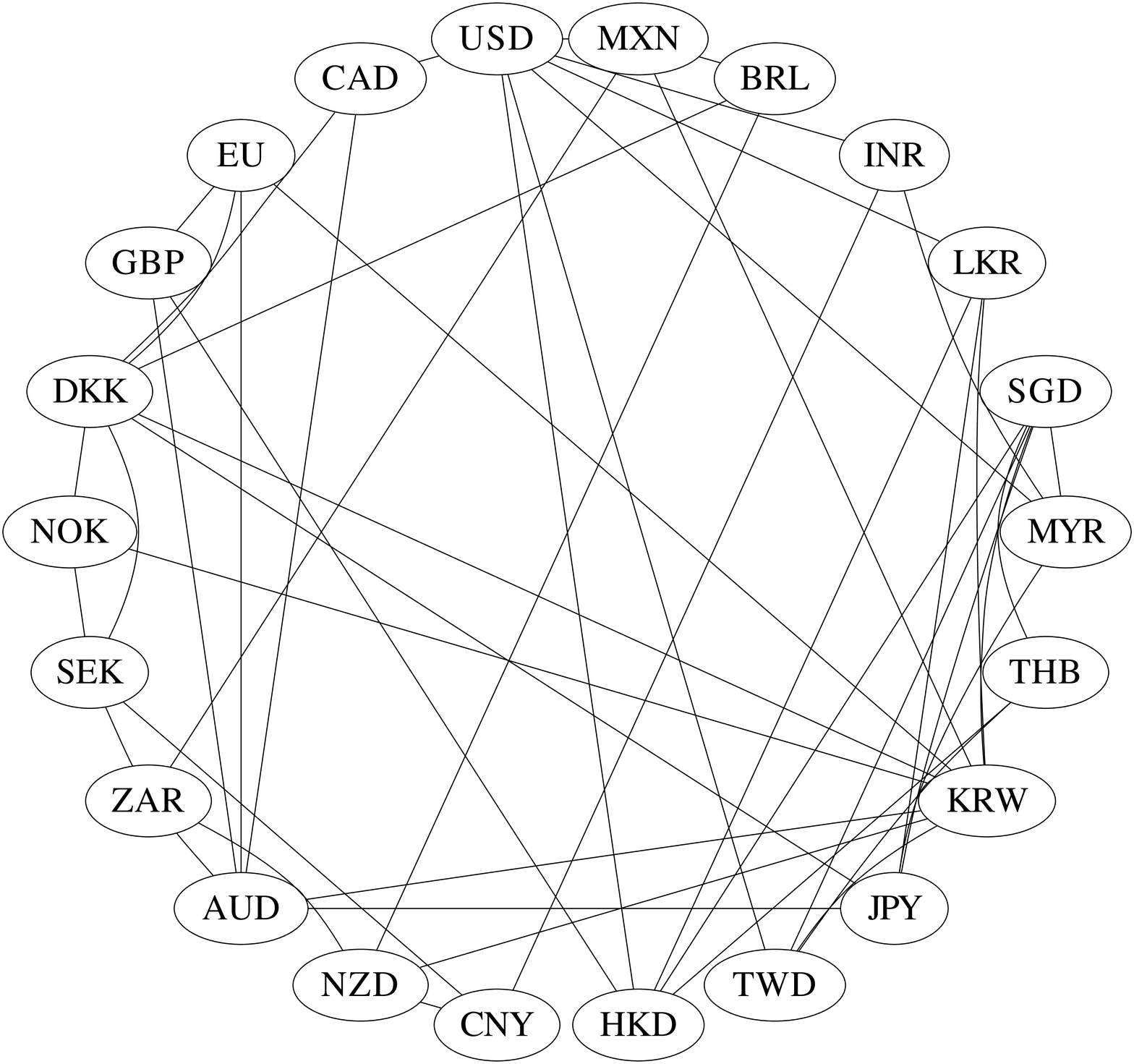}\\
	(a) & (b) & (c)
	\end{tabular}
	\caption{The reconstructed topologies obtaining applying the Cycling OLS to the exchange rate time series of the $22$ selected currencies.
	\label{fig:real}}
\end{figure*}
\section{Final Remarks}
We have formulated the problem of deriving a link structure from a set of time series, obtained by sampling the output of as many interconnected dynamical systems.
Every time series is represented as a node in a graph and their dependencies as connecting edges.
The approach we follow in determining the graph arcs relies on (linear) identification techniques based on an ad-hoc version of the Wiener Filter (guaranteeing that the filter will be real rational) with an interpretation in terms of the Hilbert projection theorem.
If a time series $X_i$ turns outs ``useful'' to model the time series $X_j$, then the directed arc $(i,j)$ is introduced in the graph.
In order to modulate the complexity of the final graph, a maximum number $m_j$ of arcs pointing at $X_j$ is assumed and a cost function is minimized to find the most appropriate arcs.
The problem has a similar formulation and strong connections with the problem of compressing sensing, which has been widely studied in the last few years.
Such a connection is possible because of the pre-Hilbert structure we have  constructed.
Indeed, the concept of inner product defines a notion of ``projection'' among stochastic processes and makes it possible to seamlessly import tools developed for the compressive sensing problem in order to tackle the problem of modeling a network topology.\\
The problem of topology reconstruction/complexity reduction is equivalent to the problem of determining a sparse Wiener filter as explained.
However, since an optimization problem must be solved for any single node, we consider the application of suboptimal solutions.
In particular, we introduce a suboptimal greedy algorithm obtained as a modification of the Orthogonal Least Squares (COLS) and an alternative approach based on iterated ReWeighted Least Squares (RWLS).
By the comparison of the two algorithms on numerical data we have shown the effectiveness of the proposed solutions.
Note that in the present paper no absolute error metric
is provided. \\
Future work will investigate some measure criteria to judge the
performance of the algorithm. For instance, as a starting method we could 
count the correct identified links and the wrong ones, when the underlying topology is known, to define the "more accurate" topology and  extend such a measure of accuracy in some norms to  the unknown topology case.

\bibliographystyle{plain}        

\bibliography{topident,l0problem,control}

\section{Appendix}
We provided hereafter the  definitions and  propositions which are additionally needed for the construction of the pre-Hilbert space.

\begin{define}
	Given two time-discrete scalar, zero-mean, wide-sense jointly stationary random processes $x_1(t)$ and $x_2(t)$, we write that $x_1 \sim x_2$
	if and only if, for any $t\in \Z$, $E[(x_2(t) - x_1(t))^2]=0$, that is $x_1(t)\stackrel{a.s.}{=}x_2(t)$ (that is $x_1(t)=x_2(t)$ almost surely for any $t \in \Z$).
\end{define}

\begin{propos}
	The relation $\sim$ is an equivalence relation on any set $X$ of zero-mean time-discrete wide-sense jointly stationary scalar random processes defined on the time domain $\Z$.
\end{propos}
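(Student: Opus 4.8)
The plan is to check the three defining properties of an equivalence relation directly from the definition, using the equivalent formulation that $x_1 \sim x_2$ holds if and only if $x_1(t) \stackrel{a.s.}{=} x_2(t)$ for every $t \in \Z$. Reflexivity is immediate: for any process $x \in X$ and any $t \in \Z$ one has $E[(x(t)-x(t))^2] = E[0] = 0$, so $x \sim x$. Symmetry is equally routine, since $(x_1(t)-x_2(t))^2 = (x_2(t)-x_1(t))^2$ identically on $\Omega$, so the two expectations $E[(x_1(t)-x_2(t))^2]$ and $E[(x_2(t)-x_1(t))^2]$ are literally the same number; if one vanishes for all $t$, so does the other, hence $x_1 \sim x_2$ implies $x_2 \sim x_1$.

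The only step that requires a genuine argument is transitivity. Assume $x_1 \sim x_2$ and $x_2 \sim x_3$, and fix $t \in \Z$. Set $A := \{\omega \in \Omega : x_1(t)(\omega) = x_2(t)(\omega)\}$ and $B := \{\omega \in \Omega : x_2(t)(\omega) = x_3(t)(\omega)\}$; by hypothesis $\Pi(A) = \Pi(B) = 1$, so $\Pi(A \cap B) = 1$, and on the event $A \cap B$ we have $x_1(t) = x_3(t)$. Thus $x_1(t) \stackrel{a.s.}{=} x_3(t)$, whence $E[(x_3(t)-x_1(t))^2] = 0$; since $t$ was arbitrary, $x_1 \sim x_3$. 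An equivalent route is to invoke the Minkowski inequality in $L^2(\Omega,\sigma,\Pi)$, giving $\|x_3(t)-x_1(t)\|_2 \le \|x_3(t)-x_2(t)\|_2 + \|x_2(t)-x_1(t)\|_2 = 0$ for every $t$.

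I do not expect a real obstacle: the only substantive fact used is that a finite intersection of probability-one events again has probability one (dually, the triangle inequality for the $L^2$ norm), and all the expectations written above are well defined because the processes, being wide-sense stationary with zero mean, have finite second moments. Having verified reflexivity, symmetry and transitivity on an arbitrary such set $X$, the proposition follows.
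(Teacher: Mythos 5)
Your proof is correct. The paper actually states this proposition without providing any proof (it is implicitly left to the reader), so there is no argument of the authors' to compare against; your verification supplies the missing detail in the standard way. Reflexivity and symmetry are indeed immediate from the definition, and your transitivity argument --- either via the intersection of two probability-one events, or equivalently via Minkowski's inequality in $L^2(\Omega,\sigma,\Pi)$ --- is exactly the right (and essentially only) substantive step; your remark that the second moments are finite, so that all the expectations involved are well defined, is the one hypothesis from wide-sense stationarity that the argument genuinely uses.
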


\begin{propos}
	Let $x_1:=H^{(1)}(z)e$, $x_2:=H^{(2)}(z)e$ be two elements of $\mathcal{F}e$. Then $x_1, x_2$ are scalar, zero-mean, wide-sense jointly stationary random processes with rational power cross-spectral densities having no poles n the set $\{z\in\C|~|z|=1\}$.
\end{propos}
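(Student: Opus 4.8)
The plan is to reduce the whole statement to one computation --- that of the cross-covariance sequence of $x_1$ and $x_2$ and its $\mathcal{Z}$-transform --- and then to invoke the elementary fact that the class of real-rational functions (with real coefficients) having no poles on $\{z\in\C\mid |z|=1\}$ is closed under finite sums and products.

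First I would write $x_1(t)=\sum_k h^{(1)}(k)\,e(t-k)$ and $x_2(t)=\sum_l h^{(2)}(l)\,e(t-l)$, where $h^{(1)},h^{(2)}\in\R^{1\times N}$ are the inverse $\mathcal{Z}$-transforms of $H^{(1)},H^{(2)}$. Because $H^{(1)}(z),H^{(2)}(z)$ are real-rational and, for $H(z)e$ to be meaningful, analytic on an annulus $r_1<|z|<r_2$ with $r_1<1<r_2$, their taps decay geometrically in $|k|$; together with the boundedness of $R_e(\tau)$ --- a consequence of Cauchy--Schwarz applied entrywise, $e$ being zero-mean with finite second moments --- every double (and triple) series appearing below converges absolutely, which licenses each interchange of $E[\cdot]$ with the summations. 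In particular $x_1,x_2$ are scalar (a $1\times N$ row times an $N\times 1$ column) and $E[x_1(t)]=\sum_k h^{(1)}(k)E[e(t-k)]=0$, and likewise $E[x_2(t)]=0$.

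Next I would compute, with $\tau:=t_2-t_1$ and using that the scalar $x_2(t_2)$ equals its own transpose together with the stationarity of $e$,
\begin{align*}
	R_{x_1x_2}(t_1,t_2)=E[x_1(t_1)x_2(t_2)]
	=\sum_{k,l}h^{(1)}(k)\,R_e(\tau+k-l)\,(h^{(2)}(l))^T,
\end{align*}
which depends on $t_1,t_2$ only through $\tau$; hence $x_1,x_2$ --- and, taking $x_1=x_2$, each of them individually --- are zero-mean, wide-sense jointly stationary. Taking the $\mathcal{Z}$-transform of $\tau\mapsto R_{x_1x_2}(\tau)$ and re-indexing the triple sum by $n:=\tau+k-l$ then factors it as
\begin{align*}
	\Phi_{x_1x_2}(z)=H^{(1)}(z^{-1})\,\Phi_e(z)\,(H^{(2)}(z))^T .
\end{align*}
Since $e$ is a vector of rationally related processes, every entry of $\Phi_e(z)$ is real-rational with no poles on $|z|=1$; the substitution $z\mapsto z^{-1}$ carries a pole $p$ to $1/p$ and can add poles only at $0$ or $\infty$, so every entry of $H^{(1)}(z^{-1})$ and of $(H^{(2)}(z))^T$ shares this property; and a finite sum of products of such functions is again real-rational (real coefficients) with no poles on $|z|=1$, since cancellation can only remove poles, never create them. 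This is exactly the claim, and it incidentally shows $\Phi_{x_1x_2}\in\mathcal{F}$, so that the unit-circle integral defining $<x_1,x_2>$ is finite.

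I expect the only genuine obstacle to be book-keeping: fixing the precise sense in which the convolution $H(z)e$ is defined (mean-square convergence) and, from the rationality-plus-no-unit-circle-poles hypothesis, extracting the geometric decay of the taps so that Fubini/Tonelli justifies all the interchanges of expectation and summation and guarantees that the region of convergence of $\Phi_{x_1x_2}(z)$ contains $|z|=1$. Everything past that point is routine algebra of rational functions.
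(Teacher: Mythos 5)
Your proof is correct and follows essentially the same route as the paper: compute the mean, show the cross-covariance depends only on $\tau$, take its $\mathcal{Z}$-transform to factor the cross-spectrum through $\Phi_e(z)$, and conclude rationality and absence of unit-circle poles from closure of that class under sums, products, and $z\mapsto z^{-1}$ (a step the paper leaves implicit), with the added care of justifying the interchanges of expectation and summation. Your factorization $H^{(1)}(z^{-1})\Phi_e(z)(H^{(2)}(z))^T$ differs from the paper's stated $H^{(1)}(z)\Phi_e(z)H^{(2)}(z^{-1})$ only by a convention/transposition slip on the paper's side; yours is the dimensionally consistent one for the stated definitions, and the conclusion is unaffected.
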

\begin{proof}
	The processes $x_1$ and $x_2$ are scalar by the definition of 
	$\mathcal{F}e$.
	Since $H^{(1)}(z), H^{(2)}(z)\in \mathcal{F}^{1\times n}$, they are real-rational and defined on the unit circle, and, as a consequence, they admit a unique representation in terms of bilateral $\mathcal{Z}$-transform
	\begin{align}
		&H^{(1)}(z)=\sum_{k=-\infty}^{+\infty}h^{(1)}_{k}z^{-k}\\
		&H^{(2)}(z)=\sum_{k=-\infty}^{+\infty}h^{(2)}_{k}z^{-k},
	\end{align}
	with $h^{(1)}_{k}, h^{(2)}_{k}\in \R^{1\times n}$ for $k\in \Z$, such that the convergence is guaranteed on the unit circle $|z|=1$.\\
	First, let us evaluate the mean of $x_1(t)$, that is
	\begin{align*}
		& E[x_1(t)]=E\left[
			\sum_{k=-\infty}^{\infty} h^{(1)}_k e(t-k)
			\right]= \\
			&=\sum_{k=-\infty}^{\infty} h^{(1)}_k E[e(t-k)]=
					H^{(1)}(1)E[e(0)]=0.
	\end{align*}
	Thus, it does not depend on the time $t$.
	Analogously $E[x_2(t)]=0$.
	Now, let us evaluate the cross-covariance function
	\begin{align*}
		&R_{x_1x_2}(t, t+\tau):=E[x_1(t)x_2(t+\tau)^T]=\\
		&=E\left[\left(\sum_{k=-\infty}^{\infty} h^{(1)}(k)e(t-k)\right)
			\left(\sum_{l=-\infty}^{\infty} e^T(t+\tau-l)  (h^{(2)}(l))^T\right)
		\right]=\\
		&=E\left[\sum_{k=-\infty}^{\infty}\sum_{l=-\infty}^{\infty} h^{(1)}(k)e(t-k)
			e^T(t+\tau-l)  (h^{(2)}(l))^T
		\right]=\\
		&=\sum_{k=-\infty}^{\infty}\sum_{l=-\infty}^{\infty} h^{(1)}(k)E[e(t-k)e^T(t+\tau-l)](h^{(2)}(l))^T=\\
		&=\sum_{k=-\infty}^{\infty}\sum_{l=-\infty}^{\infty}h^{(1)}(k)R_{e}(\tau-l+k)(h^{(2)}(l))^T
		=R_{x_1x_2}(0,\tau).
	\end{align*}
	Thus, the cross-covariance does not depend on the time $t$ and, abusing notation, it is possible to define
	\begin{align}
		R_{x_1 x_2}(\tau):=R_{x_1 x_2}(0, \tau)
	\end{align}
	Moreover, if we evaluate the bilateral $\mathcal{Z}$-transform of $R_{x_1x_2}(\tau)$, we have
	\begin{align*}
		&\Phi_{x_1 x_2}(z):=\sum_{\tau=-\infty}^{\infty}R_{x_1x_2}(\tau)z^{-\tau}=\\
		&=\sum_{\tau=-\infty}^{\infty}
			\sum_{k=-\infty}^{\infty}
				\sum_{l=-\infty}^{\infty}h^{(1)}(k)R_{e}(\tau-l+k)(h^{(2)}(l))^T z^{-\tau-l+k}z^{-k}z^l=\\
		&=H^{(1)}(z)\Phi_{e}(z)H^{(2)}(z^{-1}).
	\end{align*}
	$\hfill\square$
\end{proof}

\begin{propos}
	Given a rationally related vector $e$, the set $\mathcal{F}e$ is closed with respect to addition, transformation by $H(z)\in \mathcal{F}$ and multiplication by scalar $\alpha\in\Re$. Moreover, it holds that, for $x_1=H^{(1)}(z)e \in \mathcal{F}e$ and $x_2=H^{(2)}(z)e \in \mathcal{F}e$,
	\begin{align*}
		&H^{(1)}(z)e+H^{(2)}(z)e=\left[H^{(1)}(z)+H^{(2)}(z)\right]e\\
		&H(z)[H^{(1)}(z)e]=\left[H(z)H^{(1)}(z)\right]e\\
		&\alpha[H^{(1)}(z)e]=\left[\alpha H^{(1)}(z)\right]e.
	\end{align*}
\end{propos}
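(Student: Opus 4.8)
The plan is to establish the three displayed identities; closure of $\mathcal{F}e$ under each of the three operations then follows at once, since each identity writes the outcome of the operation as $\tilde H(z)e$ for an explicit $\tilde H(z)$ that one need only recognize as a member of $\mathcal{F}^{1\times N}$. I would work entirely through bilateral $\mathcal{Z}$-transforms, exactly as in the preceding proposition: write $x_1=H^{(1)}(z)e$ and $x_2=H^{(2)}(z)e$ with $H^{(1)}(z)=\sum_k h^{(1)}(k)z^{-k}$, $H^{(2)}(z)=\sum_k h^{(2)}(k)z^{-k}$, and $H(z)=\sum_l g(l)z^{-l}$, where by hypothesis $H^{(1)},H^{(2)}\in\mathcal{F}^{1\times N}$ and $H\in\mathcal{F}$ are real-rational with no poles on $|z|=1$. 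Consequently each series converges on an open annulus containing $|z|=1$ and the coefficient sequences decay geometrically, so that $\sum_k\|h^{(1)}(k)\|$, $\sum_k\|h^{(2)}(k)\|$ and $\sum_l|g(l)|$ are all finite; together with the boundedness of the second moments of $e$, this is what will make every rearrangement below legitimate.

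For the additive identity, fix $t$ and add the two defining series term by term: $x_1(t)+x_2(t)=\sum_k h^{(1)}(k)e(t-k)+\sum_k h^{(2)}(k)e(t-k)=\sum_k[h^{(1)}(k)+h^{(2)}(k)]e(t-k)$. The sequence $h^{(1)}+h^{(2)}$ has $\mathcal{Z}$-transform $H^{(1)}(z)+H^{(2)}(z)$, which converges on the intersection of the two annuli (still containing $|z|=1$) and is a sum of real-rational functions with no pole on $|z|=1$, hence an element of $\mathcal{F}^{1\times N}$; this gives $x_1+x_2=[H^{(1)}(z)+H^{(2)}(z)]e\in\mathcal{F}e$. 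The scalar identity is the same computation with $\alpha h^{(1)}(k)$ in place of $h^{(1)}(k)+h^{(2)}(k)$, noting that $\alpha H^{(1)}(z)\in\mathcal{F}^{1\times N}$ because $\alpha\in\R$.

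The transformation identity is the one place where a genuine interchange of infinite sums is needed, and I expect this to be the only real obstacle --- more a matter of bookkeeping than of depth. Starting from $(H(z)x_1)(t)=\sum_l g(l)\,x_1(t-l)=\sum_l g(l)\sum_k h^{(1)}(k)\,e(t-l-k)$, I would substitute $m=l+k$ and regroup to obtain $\sum_m\big(\sum_l g(l)\,h^{(1)}(m-l)\big)e(t-m)$; the reordering is justified by Fubini using the absolute summability noted above. The inner sum is the convolution $(g*h^{(1)})(m)$, whose $\mathcal{Z}$-transform is the product $H(z)H^{(1)}(z)$ --- a scalar real-rational function times a real-rational row vector, introducing no new poles on $|z|=1$ --- hence again an element of $\mathcal{F}^{1\times N}$. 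Therefore $H(z)[H^{(1)}(z)e]=[H(z)H^{(1)}(z)]e\in\mathcal{F}e$, which together with the previous paragraph proves all three identities and, with them, all three closure statements.
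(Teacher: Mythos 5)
Your proposal is correct and follows essentially the same route as the paper's own proof: term-by-term addition for the sum, a change of index and regrouping into a convolution for the transformation by $H(z)$, the scalar case as a special case, and closure read off from the absence of poles of the resulting transfer functions on $|z|=1$. Your explicit appeal to absolute summability to justify the interchange of sums is a small added courtesy the paper leaves implicit, but it does not change the argument.
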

\begin{proof}
	Let
	\begin{align}
		H(z)=\sum_{k=-\infty}^{+\infty}h(k)z^{-k}.
	\end{align}

	\begin{itemize}
		\item	Sum.\\
		We have
		\begin{align*}
			&x_1(t)+x_2(t)=\\
			&\left[\sum_{k=-\infty}^{\infty} h^{(1)}_k e(t-k)\right]+
			\left[\sum_{k=-\infty}^{\infty} h^{(2)}_k e(t-k)\right]=\\
			&\sum_{k=-\infty}^{\infty} [h^{(1)}_k+h^{(2)}_k] e(t-k)=
			([H^{(1)}(z)+H^{(2)}(z)]e)(t).
		\end{align*}
		Since $[H^{(1)}(z)+H^{(2)}(z)]$ has no poles on the set $\{z\in\C|~|z|=1\}$, $x_1+x_2\in \mathcal{F}e$.
		\item	Multiplication by $H(z)\in \mathcal{F}$.\\
		Since $x_1 \in \mathcal{F}e$, it is a $1$-dimensional rationally related vector. Then, it makes sense to compute the random process $H(z)x_1$ for $H(z)\in \mathcal{F}=\mathcal{F}^{1 \times 1}$
		\begin{align}
			& (H(z)x_1)(t):=\sum_{k=-\infty}^{\infty} h(k) x_1(t-k)=\\
			   & =\sum_{k=-\infty}^{\infty} h(k) \sum_{l=-\infty}^{\infty} h^{(1)}_{l} e(t-k-l)=\\
			&=\sum_{k=-\infty}^{\infty} h(k) \sum_{l=-\infty}^{\infty} h^{(1)}_{l-k}e(t-l)=\\
			&=\sum_{l=-\infty}^{\infty}\left[\sum_{k=-\infty}^{\infty} h(k)  h^{(1)}_{l-k}\right]e(t-l)=\\
			&=\left(\left[H(z)H^{(1)}(z)\right]e\right)(t),
		\end{align}
		where the last equality comes from the properties of the convolution.
		Since $H(z)H^{(1)}(z)$ has no poles in the set $\{z\in\C|~|z|=1\}$, $H(z)x_1\in \mathcal{F}e$
		\item Multiplication by scalar $\alpha\in\Re$.\\
		It is a special case of the previous property.$\hfill\square$
	\end{itemize}
\end{proof}
\begin{define}
	We define a scalar binary operation $<\cdot,\cdot>$ on $\mathcal{F}e$ in the following way
	\begin{align*}
		<x_1,x_2>:= R_{x_1 x_2}(0).
	\end{align*}
\end{define}	
\begin{propos}
	The set $\mathcal{F}e$, along with the operation $<\cdot,\cdot>$ is a pre-Hilbert space (with the technical assumption that $x_1$ and $x_2$ are the same processes if $x_1 \sim x_2$).
\end{propos}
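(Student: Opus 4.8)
The plan is to check, one property at a time, the defining features of a real inner product space (a pre-Hilbert space, since completeness is \emph{not} being claimed): that $<\cdot,\cdot>$ is a well-defined real-valued form on $\mathcal{F}e$ modulo $\sim$, that it is symmetric, bilinear, and positive definite. The ambient vector space structure $(\mathcal{F}e,+,\cdot,\R)$ — more precisely on $\overline{\mathcal{F}e}$, the set of $\sim$-classes — has already been established, so only the form itself needs attention. First I would record that every $x\in\mathcal{F}e$ is a scalar, zero-mean, wide-sense stationary process whose cross-power spectral densities are rational with no poles on $\{|z|=1\}$ (the earlier proposition on $\mathcal{F}e$). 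Consequently, for any $x_1,x_2\in\mathcal{F}e$ the number $R_{x_1x_2}(0)=E[x_1(t)x_2(t)]$ is finite — it equals $\frac{1}{2\pi}\int_{-\pi}^{\pi}\Phi_{x_1x_2}(e^{i\w})\,d\w$, the integral of a bounded function on the circle — so $<x_1,x_2>$ indeed makes sense as an element of $\R$. Symmetry is then immediate: the processes are real scalar, so $R_{x_1x_2}(0)=E[x_1(t)x_2(t)]=E[x_2(t)x_1(t)]=R_{x_2x_1}(0)$, i.e. $<x_1,x_2>=<x_2,x_1>$.

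Next I would prove bilinearity. The closure proposition gives $H^{(1)}(z)e+H^{(2)}(z)e=[H^{(1)}(z)+H^{(2)}(z)]e$ and $\alpha[H^{(1)}(z)e]=[\alpha H^{(1)}(z)]e$, so $\mathcal{F}e$ is genuinely closed under the combinations appearing below; then linearity of the expectation yields
\begin{align*}
	<\alpha x_1+\beta x_2,x_3>&=E[(\alpha x_1(t)+\beta x_2(t))x_3(t)]\\
	&=\alpha E[x_1(t)x_3(t)]+\beta E[x_2(t)x_3(t)]\\
	&=\alpha<x_1,x_3>+\beta<x_2,x_3>,
\end{align*}
for all $\alpha,\beta\in\R$; linearity in the second argument then follows by combining this with the symmetry already shown.

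Finally I would handle positivity, definiteness, and well-definedness together, since they all hinge on the same observation. For any $x\in\mathcal{F}e$ we have $<x,x>=E[x(t)^2]\ge 0$, and $<x,x>=0$ forces $E[x(t)^2]=0$ for every $t$, i.e. $x(t)\stackrel{a.s.}{=}0$, which is precisely $x\sim x_0$; under the stated identification of $\sim$-equivalent processes this says $x$ is the zero element, so the form is definite. The very same estimate makes $<\cdot,\cdot>$ descend to the quotient: if $x_1\sim x_1'$, then $E[(x_1(t)-x_1'(t))^2]=0$, so by the Cauchy–Schwarz inequality for $E[\cdot]$, $|<x_1,x_2>-<x_1',x_2>|=|E[(x_1(t)-x_1'(t))x_2(t)]|\le\sqrt{E[(x_1(t)-x_1'(t))^2]}\,\sqrt{E[x_2(t)^2]}=0$, and similarly in the second slot. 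Assembling symmetry, bilinearity, positive definiteness, and well-definedness, $(\mathcal{F}e,<\cdot,\cdot>)$ is a real inner product space, hence a pre-Hilbert space.

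The only real obstacle is the bookkeeping around $\sim$: on the raw set of processes the form is merely positive \emph{semi}definite, and in fact not even single-valued as written, so both definiteness and well-definedness genuinely require passing to the quotient by $\sim$ — this is exactly the content of the ``technical assumption'' in the statement. The Cauchy–Schwarz estimate above is what legitimizes that passage, and I would make sure to invoke the no-poles-on-the-unit-circle hypothesis first so that the finite second moments needed for Cauchy–Schwarz are actually available.
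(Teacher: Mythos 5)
Your proof is correct and complete. The paper itself gives no argument here --- its ``proof'' is literally left to the reader --- so there is no approach to compare against; your verification of finiteness (via boundedness of the rational spectral density on the unit circle), symmetry, bilinearity, and the Cauchy--Schwarz argument that simultaneously yields definiteness and well-definedness on the quotient by $\sim$ supplies exactly the details the paper omits, and correctly identifies the passage to $\sim$-classes as the content of the ``technical assumption.'' The only phrasing I would tighten is the remark that the form is ``not even single-valued as written'': on the raw set of processes it is single-valued but only semidefinite; it is on the quotient that single-valuedness must be checked, which is precisely what your Cauchy--Schwarz estimate does.
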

\begin{proof}
	The proof is left to the reader, making use of the introduced notations and properties. $\hfill\square$
\end{proof}

\end{document}